\documentclass{amsart}
\usepackage{amssymb}
\usepackage{graphicx}
\usepackage{algorithm}
\usepackage{algpseudocode}
\usepackage{hyperref}

\newtheorem{theorem}{Theorem}[section]
\newtheorem{lemma}[theorem]{Lemma}

\theoremstyle{definition}
\newtheorem{definition}[theorem]{Definition}

\theoremstyle{remark}
\newtheorem{remark}[theorem]{Remark}

\numberwithin{equation}{section}

\newcommand{\N}{\ensuremath{\mathbb{N}}}

\begin{document}

\title[Perfect colorings of regular graphs]{Perfect colorings of regular graphs}

\author{Joseph Ray Clarence Damasco}
\address{Institute of Mathematics, University of the Philippines Diliman, Quezon City, Philippines}
\thanks{JD is grateful to the University of the Philippines 
System for financial support through its Faculty, REPS, and Administrative Staff 
Development Program.}

\author{Dirk Frettl\"oh}
\address{Bielefeld University, Postfach 100131, 33501 Bielefeld, Germany}
\email{dirk.frettloeh@udo.edu}
\urladdr{https://math.uni-bielefeld.de/\textasciitilde frettloe}
\thanks{DF thanks the Research Center of Mathematical
Modeling (RCM$^2$) at Bielefeld University for financial support.
We are grateful to Ferdinand Ihringer and to Michael Stiebitz for valuable
information, and to the two anonymous referees for valuable remarks. 
Special thanks to Caya and Lasse Schubert for providing some
of the 3- and 4-colorings of the cube and the dodecahedron.}

\subjclass[2010]{Primary 05C15, 05C50}

\date{}

\begin{abstract}
A vertex coloring of some graph is called \emph{perfect} if each vertex 
of color $i$ has exactly $a_{ij}$ neighbors of color $j$. Being perfect
imposes several restrictions on the color adjacency matrix $(a_{ij})$. 
We give a characterization of color adjacency matrices of perfect colorings
of graphs, and in particular, connected graphs. Using this result we 
determine the lists of all color adjacency matrices corresponding to 
perfect colorings of 3-regular, 4-regular and 5-regular graphs with 
two, three and four colors. Finally, using these lists, we 
determine all perfect colorings of the edge graphs of the Platonic solids with 
two, three and four colors, respectively. 
\end{abstract}

\maketitle

\section{Introduction}

Perfect colorings of graphs and related concepts have been
studied in several contexts: algebraic graph theory, combinatorial designs,
coding theory, finite geometry; and under several different names:
equitable partitions, completely regular vertex sets, distance partitions,
association schemes, etc. Some connections between these contexts are stated 
in Remark \ref{rem:terms} below. For a broader overview see \cite{FI,I}.

Throughout the paper let $G=(V,E)$ be a finite, undirected, simple, loop-free graph.
A partition of $V$ into disjoint nonempty sets $V_1, \ldots, V_m$ is called 
an \emph{$m$-coloring} of $G$. 
Note that we do not require adjacent vertices to have different colors. 
\begin{definition}
A coloring of the vertex set $V$ of some graph $G=(V,E)$ with $m$ colors
is called \emph{perfect} if (1) all colors are used, and (2) for all $i,j$ 
the number of neighbors of colors $j$ of any vertex $v$ of color $i$ is 
a constant $a_{ij}$. The matrix $A=(a_{ij})_{1 \le i,j \le m}$ is called the
\emph{color adjacency matrix} of the perfect coloring.
\end{definition}
See Figures \ref{fig:tetrahedron}-\ref{fig:icosahedron} in Appendix B for some examples
of perfect colorings. Note that for $m=|V|$ the color adjacency matrix equals
the adjacency matrix of $G$.

\begin{remark}
  In some sources (e.g.\ \cite[Sec.~9.3]{GR} and \cite{BFW}) perfect colorings are called
  \emph{equitable partitions}. However, it seems that the term ``equitable 
partition'' is used for two different concepts in graph theory: one is
what we call perfect coloring above, the second is a coloring where
every pair of adjacent vertices has different colors, and where the
number of elements of any two color classes differs by at most one.
See for instance \cite{HS}, or \cite{Fu} and references therein. Hence we will
refer to the first concept by the term perfect coloring here. 
\end{remark}

\begin{remark} \label{rem:terms}
  Perfect colorings --- or very related concepts --- have been studied
  in several contexts. The first source we know about is \cite{Sa}, where
  the color adjacency matrix was introduced to study spectral properties of
  certain graphs. In particular, the following result was shown in 
\cite[Theorem 4.5]{CDS}, or \cite[Theorem 9.3.3]{GR}.
  \begin{theorem} \label{thm:eigenv}
    Let $M$ be the adjacency matrix of some graph $G$ and let $A$ be the
    color adjacency matrix of some perfect coloring of $G$. Then the
    characteristic polynomial of $A$ divides the characteristic polynomial
    of $M$. In particular, each eigenvalue of $A$ is an eigenvalue of $M$
    (with multiplicities).
    \end{theorem}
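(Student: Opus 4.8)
The plan is to encode the coloring by its characteristic matrix and exhibit an $M$-invariant subspace on which $M$ acts as $A$. Let $V_1,\dots,V_m$ be the color classes and let $S$ be the $|V|\times m$ matrix whose $(v,j)$ entry is $1$ if $v\in V_j$ and $0$ otherwise; write $s_1,\dots,s_m$ for its columns. Since the $V_j$ are nonempty and pairwise disjoint, these columns have disjoint nonzero supports, so $S$ has rank $m$. The first key step is the intertwining relation $MS=SA$: the $(v,j)$ entry of $MS$ is the number of neighbors of $v$ in $V_j$, which equals $a_{ij}$ whenever $v\in V_i$ by the definition of a perfect coloring, while the $(v,j)$ entry of $SA$ is $\sum_k S_{vk}a_{kj}=a_{ij}$ for $v\in V_i$; hence the two matrices coincide.

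Reading $MS=SA$ columnwise gives $Ms_j=\sum_i a_{ij}s_i$ for each $j$, so the column space $W$ of $S$ is invariant under $M$, and with respect to the basis $s_1,\dots,s_m$ of $W$ the restriction $M|_W$ is represented exactly by the matrix $A$. Hence the characteristic polynomial of $M|_W$ equals the characteristic polynomial of $A$. Extending $s_1,\dots,s_m$ to a basis of $\mathbb{C}^{|V|}$ puts $M$ in block upper triangular form with top-left block $A$, so the characteristic polynomial of $M$ factors as the product of that of $A$ with that of the complementary block; therefore the characteristic polynomial of $A$ divides that of $M$, and comparing roots with multiplicity gives the eigenvalue statement.

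I expect the only real care needed is the bookkeeping in the intertwining identity and the observation that $A$ is in general not symmetric, so one cannot split off an orthogonal complement and must instead argue via a plain basis extension (equivalently, via block triangularization); working over $\mathbb{C}$ this is harmless and is all the divisibility claim requires. Everything else is routine linear algebra once the matrix $S$ and the identity $MS=SA$ are in place.
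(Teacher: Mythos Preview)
Your proof is correct. The paper itself does not prove this theorem; it merely quotes it as a known result and cites \cite[Theorem~4.5]{CDS} and \cite[Theorem~9.3.3]{GR} for the proof. What you have written is essentially the standard argument found in those references (Godsil--Royle in particular uses exactly the characteristic matrix $S$ and the relation $MS=SA$), so there is nothing to compare beyond noting that you have supplied the proof the paper outsourced. Your remark about needing block triangularization rather than an orthogonal complement is apt, since $A$ is generally not symmetric; the cited sources handle this the same way.
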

  To name just a few more examples:
  Any subgroup   of the automorphism group of a graph $G$ induces a perfect 
coloring of $G$ by considering the orbits of the group \cite[Sec.~9.3]{GR}.
However, not every perfect coloring arises from a graph automorphism.
As another example, each
  \emph{distance partition} (coloring the vertices w.r.t.\ their distance
  to some fixed vertex) of a distance regular graph \cite{G} yields a perfect coloring.
For more related work see \cite{FI,I,K}.

  Some concrete perfect colorings for small graphs were constructed 
for instance in \cite{AM,AAb,AvMo,F,GG,M}. Here we generalize several
results from these papers. 
\end{remark}

This paper is organized as follows: 
In Section \ref{sec:charmat}, we give necessary and sufficient conditions for a
matrix to be a color adjacency matrix of a perfect coloring of some graph, 
and in particular, some connected graph. In Section \ref{sec:vercount}, we relate 
the cardinalities of the color classes of perfect colorings with two, three,
and four colors to the entries of color adjacency matrices. 
Using these results we compute in Section \ref{sec:enum} the lists of
all color adjacency matrices of perfect 
colorings with two, three and four colors for $k$-regular connected
graphs for $k \in \{3,4,5\}$, respectively, up to equivalence
by permutations of colors.
To our best knowledge the lists for three and 
four colors have not been published before.

The computations were carried out both in \texttt{sagemath} and in \texttt{scilab}.
The implementation is described in Section \ref{sec:implement}.

As an application we determine in Section 
\ref{sec:platonic} all perfect colorings of the edge graphs
of the Platonic solids using two, three and four colors, respectively.
All perfect 2-colorings of the edge graphs of the Platonic 
solids have been determined in \cite{AK} already. 
The perfect 3-colorings of the edge graphs of the Platonic solids were 
studied in \cite{AA}, but some cases were missed in the preprint version. 
To our knowledge the perfect 4-colorings of the 
edge graphs of the Platonic solids given in this paper are new.

\section{Characterization of color adjacency matrices}\label{sec:charmat}

We will find necessary and sufficient conditions under which a given 
nonnegative integer matrix corresponds to a perfect coloring of a graph
as it seems that the relevant literature on perfect colorings of graphs lack
such explicitly stated conditions. In the ensuing discussion, given a perfect 
coloring for a graph~$G$ with color adjacency matrix 
$A=(a_{ij}) \in \N^{m \times m}$, let $v_{i}$ denote the number of vertices 
in the color class $V_{i}$, for $1 \leq i \leq m$. 

\begin{theorem}\label{thm:charmat}
Suppose $A=(a_{ij}) \in \N^{m \times m}$. 
Then $A$ is a color adjacency matrix for a perfect $m$-coloring of some graph 
$G=(V,E)$ if and only if the following hold:
\begin{enumerate}
\item \textnormal{(}Weak symmetry\textnormal{)} For $1 \leq i, j \leq m$, $a_{ij}=0$ 
if and only if $a_{ji}=0$.
\item \textnormal{(}Consistency\textnormal{)} For any nontrivial cycle 
$(n_{1} \, n_{2} \, \ldots \, n_{t})$ in the symmetric group $S_{m}$ on the 
set $\{1,2,\ldots,m\}$,
\[a_{n_{1},n_{2}}a_{n_{2},n_{3}}\cdots a_{n_{t-1},n_{t}} a_{n_{t},n_{1}} = 
a_{n_{2},n_{1}}a_{n_{3},n_{2}}\cdots a_{n_{t},n_{t-1}} a_{n_{1},n_{t}}.\]
\end{enumerate}
Moreover, there is a connected graph $G$ with a perfect coloring 
corresponding to $A$ if and only if $A$ fulfills (1) and (2), and $A$ is 
irreducible. 
\end{theorem}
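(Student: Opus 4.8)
The plan is to prove necessity and sufficiency separately, handling the ``moreover'' clause about irreducibility in parallel with conditions (1)--(2) in each direction. For necessity, suppose $G=(V,E)$ carries a perfect $m$-coloring with matrix $A$, and write $v_i=|V_i|$. Weak symmetry is immediate: $a_{ij}>0$ means some edge of $G$ joins a color-$i$ vertex to a color-$j$ vertex, and the same edge read the other way witnesses $a_{ji}>0$. Counting the edges between $V_i$ and $V_j$ from each side yields $v_i a_{ij}=v_j a_{ji}$ for all $i,j$ (for $i=j$ this is trivial). Now take a nontrivial cycle $(n_1\,n_2\,\ldots\,n_t)$ in $S_m$, indices read cyclically: if some ``reverse'' entry $a_{n_{k+1}n_k}$ vanishes, then weak symmetry makes both sides of the consistency identity equal to $0$; otherwise substitute $a_{n_k n_{k+1}}=(v_{n_{k+1}}/v_{n_k})\,a_{n_{k+1}n_k}$ into the left-hand product and observe that $\prod_k v_{n_{k+1}}/v_{n_k}=1$ telescopes around the cycle, leaving exactly the right-hand product. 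Finally, if $G$ is connected, then projecting any path of $G$ onto the colors of its successive vertices produces a walk in the ``color graph'' $H$ on $\{1,\ldots,m\}$ (with $i\sim j$ whenever $a_{ij}\neq 0$, which is symmetric by (1)); since all colors occur, $H$ is connected, i.e.\ $A$ is irreducible.

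For sufficiency, assume $A$ satisfies (1) and (2). The key preliminary step is to produce positive integers $v_1,\ldots,v_m$ with $v_i a_{ij}=v_j a_{ji}$ for every $i,j$. Working within one connected component of $H$ at a time, fix a spanning tree, give the value $1$ to a root, and propagate along tree edges by the rule $v_j:=(a_{ij}/a_{ji})\,v_i$ (both entries being positive on an edge of $H$, by (1)). Applying the consistency identity to the cycle formed by any non-tree edge of $H$ together with the tree-path joining its endpoints shows that $v_i a_{ij}=v_j a_{ji}$ then holds on every edge of $H$, hence for all pairs $i,j$ (it is trivial when $a_{ij}=a_{ji}=0$ and when $i=j$). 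Clearing denominators turns the $v_i$ into positive integers, and they may be rescaled by any common positive factor.

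It remains to build a graph realizing $A$. Pick a large even integer $N$, set $|V_i|:=N v_i$, and on the disjoint union $V_1\sqcup\cdots\sqcup V_m$ proceed as follows: inside each $V_i$ place a simple $a_{ii}$-regular graph (which exists once $N v_i>a_{ii}$, the number $N v_i a_{ii}$ being even since $N$ is), and between each pair $V_i,V_j$ with $a_{ij}\neq 0$ place a simple bipartite graph in which every vertex of $V_i$ has degree $a_{ij}$ and every vertex of $V_j$ has degree $a_{ji}$; this is realizable because $|V_i|a_{ij}=|V_j|a_{ji}$ and, for $N$ large, the two degrees do not exceed the sizes of the opposite parts. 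When $a_{ij}=0$ we add no edges between $V_i$ and $V_j$, which is consistent since then $a_{ji}=0$ as well by (1). By construction every vertex of color $i$ has exactly $a_{ii}$ neighbors of color $i$ and exactly $a_{ij}$ of color $j$ for $j\neq i$, so the coloring is perfect with matrix $A$, and all colors are used because $v_i\geq 1$. If moreover $A$ is irreducible, then $H$ is connected; choosing each bipartite gadget connected (possible whenever its two degrees are positive) puts every $V_i$ inside a single component of $G$, and these components are glued together along the edges of the connected graph $H$, so $G$ is connected. (The degenerate case $m=1$, $a_{11}=0$, is handled directly by $G=K_1$.)

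The conceptual core is the spanning-tree step: conditions (1) and (2) are precisely what make the propagation of the weights $v_i$ well defined. The one genuinely non-formal ingredient is the realizability --- as a \emph{simple}, and when needed \emph{connected}, graph --- of the prescribed regular and biregular bipartite degree sequences; this is classical (Gale--Ryser, or an explicit balanced ``round-robin'' assignment of edges), and the sole role of choosing $N$ large and even is to stay safely inside the range where these realizations are available. I expect this last point to require the most bookkeeping, though no real difficulty.
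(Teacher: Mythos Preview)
Your argument tracks the paper's closely: double-count edges for $v_i a_{ij}=v_j a_{ji}$, telescope around cycles for necessity of (2), propagate weights along a spanning tree of the color graph $H$ and invoke (2) on the fundamental cycles to see the weights are well defined, then assemble $G$ from regular and biregular pieces exactly as in Lemmas~\ref{lem:reg} and~\ref{lem:breg}. All of this is correct and essentially identical to the paper's proof.

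The one genuine gap is in your treatment of the ``moreover'' clause. The claim that each bipartite gadget can be chosen \emph{connected} ``whenever its two degrees are positive'' is false: a $(1,1)$-biregular bipartite graph on parts of equal size $n$ is a perfect matching, hence disconnected for every $n>1$, and enlarging $N$ only makes this worse. Concretely, for $A=\big(\begin{smallmatrix}0&1\\1&0\end{smallmatrix}\big)$ your construction with any $N\ge 2$ necessarily produces $N$ disjoint copies of $K_2$; no choice of gadget yields a connected $G$. More generally a $(p,q)$-biregular graph on parts of sizes $r,s$ has $pr$ edges and $r+s$ vertices, so connectedness forces $pr\ge r+s-1$, which fails whenever $p=q=1$ and $r=s\ge 2$.

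The paper sidesteps this entirely. It builds the auxiliary graph exactly as you do, observes it may be disconnected, and then simply takes $G$ to be any one of its connected components. This works because the perfect-coloring condition is purely local (each vertex of color $i$ has $a_{ij}$ neighbors of color $j$) and hence inherited by every component, while irreducibility of $A$ guarantees that each component contains all $m$ colors: from any vertex of color $i$ one reaches every color by walking along the connected color graph $H$. Replacing your connected-gadget claim by this ``pass to a component'' step closes the gap with no other changes needed.
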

A symmetric matrix $M$ is called irreducible if it is not conjugate via a permutation 
matrix to a block diagonal matrix having more than one block.
(By ``block diagonal matrix'' we mean a square matrix having square matrices
on its main diagonal, and all other entries being zero.) It is well-known 
that a directed graph $G$ is connected if and only if its adjacency matrix is
irreducible. A weaker statement is true here: if a graph $G$ is connected 
then its color adjacency matrix is irreducible. (Because one can travel
from any color to any other color.) 

Before proving Theorem~\ref{thm:charmat}, we note that in any perfect coloring,
the subgraph of $G$ induced by the vertices of color $i$ is an $a_{ii}$-regular graph.
In addition, the edges between the vertices of color $i$ and those of color $j$
form the edge set of an $(a_{ij},a_{ji})$-biregular graph, where by a 
$(p,q)$-biregular graph we mean a bipartite graph with bipartition 
$(U,W)$ such that each vertex in $U$ has degree $p$ and each vertex in $W$ 
has degree $q$. 

We shall prove the sufficiency of conditions (1) and (2) of 
Theorem~\ref{thm:charmat} constructively using Lemmas~\ref{lem:reg} 
and~\ref{lem:breg}, which characterize regular and
biregular graphs. All variables in these statements are
nonnegative integers. We include a proof of Lemma~\ref{lem:breg}
because we are not aware of a reference containing
a proof of it.

\begin{lemma}[\cite{ChLZ}]\label{lem:reg}
There exists a $k$-regular graph with $n$ vertices if and only if $n \geq k+1$ 
and $nk$ is even.
\end{lemma}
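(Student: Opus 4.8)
The plan is to verify both implications directly. Necessity is immediate: if $G$ is a $k$-regular graph on $n$ vertices, then since $G$ is simple and loop-free each vertex has at most $n-1$ neighbours, so $k \le n-1$, i.e.\ $n \ge k+1$; and by the handshake lemma $nk = \sum_{v \in V} \deg(v) = 2|E|$, so $nk$ is even. No difficulty here.

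For sufficiency I would exhibit an explicit graph on the vertex set $\mathbb{Z}/n\mathbb{Z} = \{0,1,\dots,n-1\}$. Suppose first that $k = 2\ell$ is even. Join $i$ to each of $i \pm 1, i \pm 2, \dots, i \pm \ell$ (all arithmetic modulo $n$); equivalently, take the circulant graph with connection set $S = \{\pm 1, \dots, \pm \ell\}$. The hypothesis $n \ge k+1 = 2\ell+1$ forces $\ell < n/2$, so $S$ consists of $2\ell$ distinct nonzero residues closed under negation, and the resulting graph is simple and $2\ell$-regular (in the tight case $n = 2\ell+1$ it is $K_n$). Now suppose $k = 2\ell+1$ is odd; then the condition that $nk$ be even forces $n$ to be even. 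Take the circulant graph just constructed for $2\ell$ and add the perfect matching consisting of all pairs $\{i,\, i + n/2\}$. Since $n \ge k+1 = 2\ell+2$ we get $n/2 \ge \ell+1$, so the offset $n/2$ differs from every element of $S$ and is nonzero; hence the matching edges are genuinely new, each vertex gains exactly one further incident edge, and the result is a simple $k$-regular graph on $n$ vertices.

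The only steps requiring real attention are the simplicity checks for the circulant construction — that no offset vanishes modulo $n$ and that no two offsets are identified, which is precisely what the bound $n \ge k+1$ delivers — together with, in the odd case, the observation that the matching offset $n/2$ does not coincide with an offset already used in $S$. The boundary instances $n = k+1$ are exactly where these inequalities are invoked at their limit, so that is where a careful reader should look; everything else reduces to a one-line degree count, and I expect no obstacle beyond this bookkeeping.
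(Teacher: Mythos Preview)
Your proof is correct. The necessity argument is standard, and the circulant construction for sufficiency is sound: the bound $n \ge k+1$ guarantees that the connection set $\{\pm 1,\dots,\pm \ell\}$ really has $2\ell$ distinct nonzero residues, and in the odd case the additional offset $n/2$ is its own negative and lies outside that set, so it contributes exactly one new edge at every vertex.

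The paper itself does not supply a proof of this lemma: it cites the result from \cite{ChLZ} and remarks only that it is a simple consequence of the Erd\H{o}s--Gallai theorem. Your argument therefore differs in spirit. Invoking Erd\H{o}s--Gallai amounts to checking that the constant sequence $(k,k,\dots,k)$ is graphical, which is short but non-constructive and imports a much stronger theorem than the situation requires. Your circulant construction is entirely elementary, self-contained, and yields an explicit graph; for the purposes of the paper, where Lemma~\ref{lem:reg} is used only existentially inside the proof of Theorem~\ref{thm:charmat}, either approach suffices, but yours has the advantage of not depending on an external result.
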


Lemma \ref{lem:reg} is a simple consequence of the Erd\H{o}s-Gallai Theorem \cite{EG}.

\begin{lemma}\label{lem:breg}
There exists a $(p,q)$-biregular 
graph with bipartition $(U,W)$ and $|U| = r$, $|W| = s$
if and only if $p \le s$, $q \le r$, and $pr=qs$.
\end{lemma}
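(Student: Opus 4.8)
The plan is to prove Lemma~\ref{lem:breg} by separately establishing necessity and sufficiency of the three conditions $p \le s$, $q \le r$, and $pr = qs$.

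\medskip

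\noindent\textbf{Necessity.} Suppose such a $(p,q)$-biregular graph exists. Counting edges from the $U$-side gives $|E| = pr$, and from the $W$-side gives $|E| = qs$, so $pr = qs$. Since the graph is simple and loop-free, each vertex in $U$ has at most $|W| = s$ neighbors, forcing $p \le s$; symmetrically $q \le r$. (If $p = 0$ then necessarily $q = 0$ as well by $pr = qs$ assuming $r,s > 0$; the degenerate empty-edge case is consistent with all three inequalities.)

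\medskip

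\noindent\textbf{Sufficiency.} Assume $p \le s$, $q \le r$, $pr = qs$. The plan is to give an explicit construction. Label the vertices of $U$ as $u_0, u_1, \ldots, u_{r-1}$ and those of $W$ as $w_0, w_1, \ldots, w_{s-1}$. The natural idea is a ``circulant-type'' incidence: join $u_i$ to $w_j$ whenever $j \in \{\, (iq + \ell) \bmod s : 0 \le \ell < p \,\}$, i.e.\ each $u_i$ is joined to a block of $p$ consecutive vertices in $W$ (indices mod $s$), with the starting point of the block for $u_i$ advancing by $q$ each time $i$ increases. Then every vertex in $U$ has degree exactly $p$ by construction, and no multi-edges occur because $p \le s$. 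The total edge count is $pr = qs$, so the average $W$-degree is $q$; the content is to show every $W$-degree is exactly $q$. For this one checks that the map $(i,\ell) \mapsto (iq + \ell) \bmod s$ from $\{0,\ldots,r-1\} \times \{0,\ldots,p-1\}$ is ``balanced'' over $\mathbb{Z}_s$: the block of edges contributed by consecutive $u_i$'s tiles $\mathbb{Z}_s$ evenly because the blocks have length $p$, step size $q$, and $pr = qs$ means the blocks wrap around $W$ exactly $q$ times, covering each residue class the same number of times. Making this precise is the crux of the argument: one wants to verify $|\{ i : w_j \text{ is joined to } u_i \}| = q$ for each fixed $j$, which amounts to counting pairs $(i,\ell)$ with $iq + \ell \equiv j \pmod s$ and $0 \le \ell < p$, $0 \le i < r$.

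\medskip

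I expect the main obstacle to be exactly this counting step, because the behavior depends on $\gcd(q,s)$ and on how the length-$p$ windows interact with the arithmetic progression $\{0, q, 2q, \ldots, (r-1)q\} \bmod s$; one has to handle the cases where $q$ and $s$ share common factors. An alternative, cleaner route that sidesteps the delicate modular counting is to reduce to the regular case: observe that $pr = qs$ together with $p \le s$, $q \le r$ is exactly the condition for the bipartite degree sequence $(\underbrace{p,\ldots,p}_{r}; \underbrace{q,\ldots,q}_{s})$ to be bigraphic, which can be verified directly from the Gale--Ryser theorem, or one can build the graph greedily: repeatedly, while edges remain to be placed, pick the vertex of $U$ of largest remaining demand and join it to the $p$ (or fewer, at the end) vertices of $W$ of largest remaining demand; a standard exchange argument shows this never creates a multi-edge and terminates with all demands met precisely when the three numerical conditions hold. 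I would present whichever of these is shortest to write rigorously — likely the explicit circulant construction with a short lemma on the balanced covering, falling back to Gale--Ryser if the modular bookkeeping gets unwieldy.
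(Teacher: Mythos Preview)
Your necessity argument is fine and matches the paper. The sufficiency argument, however, has a genuine error in the explicit construction: using step size $q$ (i.e.\ joining $u_i$ to $w_{(iq+\ell)\bmod s}$ for $0\le \ell < p$) does \emph{not} in general produce a $(p,q)$-biregular graph. Take $p=3$, $q=2$, $r=4$, $s=6$: all three conditions hold ($p\le s$, $q\le r$, $pr=12=qs$), but your rule gives blocks starting at $0,2,4,0$, so $u_0$ and $u_3$ both get $\{w_0,w_1,w_2\}$, and one checks $\deg(w_0)=\deg(w_2)=3$ while $\deg(w_3)=\deg(w_5)=1$. Your heuristic ``$pr=qs$ means the blocks wrap around $W$ exactly $q$ times'' is valid only when the step equals the block length; with step $q$ the total advance of the starting point after $r$ blocks is $rq$, which need not be a multiple of $s$.

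The paper's construction is almost identical to yours but with the single change that the step size is $p$ rather than $q$: join $u_a$ to $w_{(ap+b)\bmod s}$ for $0\le b\le p-1$. Then the $pr$ edges are indexed exactly by the integers $0,1,\ldots,pr-1$, and the $W$-endpoint of edge $n$ is $w_{n\bmod s}$; since $pr=qs$, each residue mod $s$ is hit exactly $q$ times, and the counting is a one-liner with no dependence on $\gcd$'s. So the fix is trivial once seen, and it makes your anticipated ``main obstacle'' disappear entirely---no Gale--Ryser or greedy fallback is needed.
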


\begin{proof}
That $p \le s$, $q \le r$, and $pr=qs$ are necessary follows from the
definition of biregular graphs. To prove the converse,
we assume $p$, $q \neq 0$ and construct a graph with the desired properties. 
Denote the vertices in $U$ by $u_0$, $u_{1}$, $\ldots$, $u_{r-1}$ and the vertices in 
$W$ by $w_0$, $w_{1}$, $\ldots$, $w_{s-1}$. We will use a greedy construction:
join vertex $u_0$ with $w_0$, $w_{1}$, $\ldots$, $w_{p-1 \bmod s}$, join vertex 
$u_{1}$ with $w_{p \bmod s}$, $w_{p+1 \bmod s}$, $\ldots$, $w_{2p-1 \bmod s}$, 
and so on. That is, for each $0 \leq a \leq r-1$, join vertex $u_{a}$ to
$w_{ap+b \bmod s}$ for all $0 \leq b \leq p-1$. Because $pr=qs$, every vertex in 
$U$ is adjacent to $p$ vertices in $W$, and every vertex in $W$ is adjacent to 
exactly $q$ vertices in $U$. 
\end{proof}

We now prove our characterization of color adjacency matrices.

\begin{proof}[Proof of Theorem \ref{thm:charmat}]

The necessity of the first condition follows trivially from the symmetry
of the vertex adjacency relation. Meanwhile, the necessity of the second 
condition when $t=2$ is also trivial: 
$a_{n_{1},n_{2}}a_{n_{2},n_{1}} = a_{n_{2},n_{1}}a_{n_{1},n_{2}}$. 

When $2 < t \leq m$, the identity arises from the fact that for any $i$ and $j$,
$a_{ij}v_{i} = a_{ji}v_{j}$. This is true by counting the number of edges between
$V_{i}$ and $V_{j}$ in two ways. Thus, $a_{n_{1},n_{2}}v_{n_{1}}=a_{n_{2},n_{1}}v_{n_{2}}$, and 
therefore $a_{n_{1},n_{2}}a_{n_{2},n_{3}}v_{n_{1}}=a_{n_{2},n_{1}}a_{n_{2},n_{3}}v_{n_{2}}
=a_{n_{2},n_{1}}a_{n_{3},n_{2}}v_{n_{3}}$. It follows by induction that 
\begin{equation} \label{eq:an1an2}
a_{n_{1},n_{2}}a_{n_{2},n_{3}}\cdots a_{n_{t-1},n_{t}} v_{n_{1}} = 
a_{n_{2},n_{1}}a_{n_{3},n_{2}}\cdots a_{n_{t},n_{t-1}} v_{n_{t}}.
\end{equation}
Combining this with $a_{n_{1},n_{t}}v_{n_{1}} = a_{n_{t},n_{1}}v_{n_{t}}$ gives
the equation in condition (2).

Before establishing the converse, we note the equation arising from the induction 
in the following remark as it will be used later.

\begin{remark}\label{rem:eqcon}\ 
\begin{enumerate}
\item For any nontrivial cycle 
$(n_{1} \, n_{2} \, \ldots \, n_{t})$ in the symmetric group $S_{m}$ on the set $\{1,2,\ldots,m\}$,
\[ a_{n_{1},n_{2}}a_{n_{2},n_{3}}\cdots a_{n_{t-1},n_{t}} v_{n_{1}} = 
a_{n_{2},n_{1}}a_{n_{3},n_{2}}\cdots a_{n_{t},n_{t-1}} v_{n_{t}}.\]
\item The consistency condition means that for any $i$ and $j$, the different 
ways of relating $v_{i}$ and $v_{j}$ by products of the $a_{n_r,n_s}$'s must all agree.
\end{enumerate}
\end{remark}

To prove the converse, given a matrix $A \in \N^{m \times m}$ satisfying conditions
(1) and (2), we construct a graph $G$ with color adjacency matrix $A$,
where the color classes are denoted by $V_{1}, V_{2}, \ldots, V_m$. By possibly conjugating
via a permutation matrix, we assume that $A$ is written as a block diagonal matrix
with the largest number of blocks possible. (For instance, a block consisting of
a diagonal matrix is interpreted as many blocks of size $1$.)

Suppose first that there is only one block. This means that the matrix is not
permutation-conjugate to a block diagonal matrix with more than one block,
and that there is a path from any color to any other color. Hence, 
plugging the nonzero nondiagonal entries into Equation \eqref{eq:an1an2}  
we obtain the ratio of $v_{i}$ to $v_{j}$ for \emph{every} $i \neq j$
There may be several ways of relating $v_{i}$ and $v_{j}$ by entries of $A$, 
but because of condition (2), we know all these ratios are consistent, and 
there is an ordered $m$-tuple of positive integers $(v_{1}',v_{2}',\ldots,v_{m}')$ 
satisfying all required relations.

Moreover, there is a large enough multiple $(v_{1},v_{2},\ldots,v_{m})$ of the $m$-tuple
above such that for each $i$, $v_{i} \geq  a_{ii}+1$, $a_{ii}v_{i}$ is even, 
and $v_{i} \geq a_{ji}$ for $j \neq i$. Let $V_{i}$ have $v_{i}$ elements. 
By Lemmas~\ref{lem:reg} and~\ref{lem:breg}, we may form an
$a_{ii}$-regular graph using the vertices in $V_{i}$, and the edge set of 
an $(a_{ij},a_{ji})$-biregular bipartite graph between distinct cells 
$V_{i}$ and $V_{j}$. The resulting graph has the 
perfect coloring $(V_{1},\ldots,V_{m})$ with color adjacency matrix $A$.
This graph may still be disconnected, but any component of this graph
satisfies the adjacency relations described by $A$. Thus, we choose $G$
to be one of the components of that auxiliary graph. Then $G$ is a
connected graph having a perfect $m$-coloring with color adjacency matrix $A$.

On the other hand, if $A$ has multiple blocks, we perform the procedure above
for each block of $A$, and let $G$ be the union of the graphs formed 
for each block. Then $G$ has a perfect $m$-coloring with color adjacency matrix $A$.
Moreover, $G$ is necessarily disconnected, as each vertex in 
$G$ corresponding to one block of $A$ is not adjacent to any vertex of 
$G$ corresponding to any other block of $A$.


This completes the characterization of color adjacency matrices.
\end{proof}

\begin{remark} \label{rem:conn}
Given a realizable color adjacency matrix $A=(a_{ij})_{m \times m}$, 
let $G'$ be the directed multigraph having the color classes $V_{i}$ 
as vertices and $A$ as adjacency matrix. We note that $A$ is 
permutation-conjugate to a block diagonal matrix with more than one block 
if and only if $G'$ is disconnected. 
In our ensuing computations to find candidate 
color adjacency matrices $A$ for connected graphs $G$, 
we use the equivalent fact that $G'$ must also have a spanning tree. Thus,
there is an arrangement $V_{n_{1}}$, $V_{n_{2}}$, $\ldots$, $V_{n_{m}}$
of the color classes such that for $i \geq 1$, $V_{n_{i+1}}$ is connected to 
$V_{n_{j}}$ for some $j \leq i$. In other words, there exists an $(m-1)$-tuple  
$(a_{n_{1},n_{2}},a_{n'_{3},n_{3}},\ldots,a_{n'_{m},n_{m}})$
of nonzero entries such that $n_{1} \neq n_{2}$, and if $m \geq 3$,
$n'_{i} \in \{n_{1},n_{2},\ldots,n_{i-1}\}$ and 
$n_{i} \notin \{n_{1},n_{2},\ldots,n_{i-1}\}$
for $3 \leq i \leq m$.
\end{remark}

\section{Counting lemmas}\label{sec:vercount}
For $m \in \{2,3,4\}$, the next lemmas count the number of vertices in each color class. 
Recall that there may be several ways to express $v_i$ in terms of $v_j$ and the 
$a_{n_r,n_s}$'s depending on the different ways how the induced graph $G'$
(see above) might be connected. Let $G''$ be the simple graph induced by
the multigraph $G'$ above by identifying multiple edges and removing loops.
By Cayley's tree formula the number of possible spanning 
trees of $G''$ equals $m^{m-2}$ \cite{C}. Hence there is only $2^0 = 1$ case to 
consider for $m=2$, there are $3^1$ cases for $m=3$, and $4^2$ cases for $m=4$.
The case $m=2$ (Lemma \ref{lem:vercount2}) appears in \cite{AK}. 
Hence we sketch a proof only for the case when $m=3$. 

\begin{lemma}\label{lem:vercount2}
Let $A= (a_{ij}) \in \N^{2 \times 2}$ be a color adjacency matrix of 
some connected graph $G=(V,E)$. Then $a_{12}$ and $a_{21}$
are both nonzero, and if $v_{i}$ denotes the number of vertices of color $i$, then
\[ v_{1} = \frac{|V|}{1+\frac{a_{12}}{a_{21}}}, 
v_{2} = \frac{|V|}{\frac{a_{21}}{a_{12}}+1}.  \]
\end{lemma}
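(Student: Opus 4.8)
The plan is to obtain both assertions from a single edge-counting identity together with the trivial relation $v_1 + v_2 = |V|$, after first disposing of the claim that $a_{12}$ and $a_{21}$ do not vanish.

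First I would show $a_{12}, a_{21} \geq 1$. Since all colors are used, both $V_1$ and $V_2$ are nonempty. If $a_{12} = 0$, then no vertex of color $1$ has a neighbor of color $2$; by weak symmetry (condition (1) of Theorem~\ref{thm:charmat}) also $a_{21} = 0$, so no vertex of color $2$ has a neighbor of color $1$ either. Then no edge joins $V_1$ to $V_2$, and as $V = V_1 \cup V_2$ with both parts nonempty, $G$ is disconnected, contradicting the hypothesis. (Equivalently, invoke the remark after Theorem~\ref{thm:charmat} that the color adjacency matrix of a connected graph is irreducible: a $2 \times 2$ symmetric matrix fails to be irreducible precisely when its off-diagonal entries vanish.)

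Next I would count the edges of $G$ with one endpoint in $V_1$ and the other in $V_2$ in two ways: each of the $v_1$ vertices of color $1$ contributes exactly $a_{12}$ such edges, and each of the $v_2$ vertices of color $2$ contributes exactly $a_{21}$ such edges, so $a_{12} v_1 = a_{21} v_2$. Since $a_{21} \neq 0$ this gives $v_2 = \frac{a_{12}}{a_{21}} v_1$; substituting into $v_1 + v_2 = |V|$ yields $v_1\bigl(1 + \frac{a_{12}}{a_{21}}\bigr) = |V|$, hence the stated formula for $v_1$, and symmetrically, writing $v_1 = \frac{a_{21}}{a_{12}} v_2$ and substituting gives the formula for $v_2$.

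There is no real obstacle here; the only point needing care is the order of the argument, since both formulas divide by $a_{12}$ and $a_{21}$, so their nonvanishing must be settled first. This is the case $m = 2$ of Remark~\ref{rem:eqcon}: the consistency condition is vacuous, the lone relation $a_{12} v_1 = a_{21} v_2$ already fixes the ratio $v_1 : v_2$, and $v_1 + v_2 = |V|$ then determines both cardinalities.
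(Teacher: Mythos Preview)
Your proposal is correct and follows exactly the approach the paper itself uses: the paper does not give an explicit proof of this lemma (it cites \cite{AK}), but the sketch it provides for the $m=3$ case proceeds precisely by combining the double-counting identity $a_{ij}v_i=a_{ji}v_j$ with $\sum_i v_i=|V|$, which is what you do. Your handling of the nonvanishing of $a_{12}$ and $a_{21}$ via connectedness (equivalently, irreducibility of $A$) is also in line with the paper's discussion after Theorem~\ref{thm:charmat}.
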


For $m=3$, while the graph $G'$ may have multiple spanning trees, 
there is only one up to isomorphism, namely, a path on three vertices. 
Dealing with the three cases determined by Cayley's formula 
may then be summarized in the following lemma.

\begin{lemma}\label{lem:vercount3}
Let $A= (a_{ij}) \in \N^{3 \times 3}$ be a color adjacency matrix of some
connected graph $G=(V,E)$. Then there is a permutation $(n_{1} \, n_{2} \, n_{3})$ of 
$(1 \, 2 \, 3)$ such that $a_{n_{1},n_{2}}a_{n_{1},n_{3}} \neq 0$. 
If $v_i$ denotes the number of vertices of color $i$, then
\begin{align*}
v_{n_{1}} & = \frac{|V|}{1+\frac{a_{n_{1},n_{2}}}{a_{n_{2},n_{1}}}+\frac{a_{n_{1},n_{3}}}{a_{n_{3},n_{1}}}}, \\
v_{n_{2}} & = \frac{|V|}{\frac{a_{n_{2},n_{1}}}{a_{n_{1},n_{2}}}+1+\frac{a_{n_{2},n_{1}}a_{n_{1},n_{3}}}{a_{n_{1},n_{2}}a_{n_{3},n_{1}}}}, \\
v_{n_{3}} & = \frac{|V|}{\frac{a_{n_{3},n_{1}}}{a_{n_{1},n_{3}}}+\frac{a_{n_{3},n_{1}}a_{n_{1},n_{2}}}{a_{n_{1},n_{3}}a_{n_{2},n_{1}}}+1}. 
\end{align*}
\end{lemma}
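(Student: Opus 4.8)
The plan is to establish the existence of the claimed permutation and then derive the three formulas by combining the edge-counting identities $a_{ij}v_i = a_{ji}v_j$ with the total $v_1+v_2+v_3 = |V|$.

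First I would justify the existence of $(n_1\,n_2\,n_3)$ with $a_{n_1,n_2}a_{n_1,n_3} \neq 0$. Since $G$ is connected, its color adjacency matrix $A$ is irreducible (as noted after Theorem~\ref{thm:charmat}), equivalently the induced multigraph $G'$ on the three color classes is connected. Up to isomorphism the only connected simple graph on three vertices that a spanning tree of $G''$ can be is a path, so there is a vertex (color) adjacent to both others; calling it $n_1$ and the other two $n_2,n_3$ gives nonzero off-diagonal entries $a_{n_1,n_2}, a_{n_1,n_3}$, hence also $a_{n_2,n_1}, a_{n_3,n_1}$ nonzero by weak symmetry (Theorem~\ref{thm:charmat}(1)). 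This handles all three Cayley cases simultaneously: whichever color plays the central role of the path becomes $n_1$.

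Next I would set up the ratios. From the two-way edge count between $V_{n_1}$ and $V_{n_2}$ we get $a_{n_1,n_2}v_{n_1} = a_{n_2,n_1}v_{n_2}$, so $v_{n_2} = \tfrac{a_{n_1,n_2}}{a_{n_2,n_1}}v_{n_1}$; likewise $v_{n_3} = \tfrac{a_{n_1,n_3}}{a_{n_3,n_1}}v_{n_1}$. Substituting into $v_{n_1}+v_{n_2}+v_{n_3}=|V|$ yields
\[
v_{n_1}\left(1+\frac{a_{n_1,n_2}}{a_{n_2,n_1}}+\frac{a_{n_1,n_3}}{a_{n_3,n_1}}\right) = |V|,
\]
which is the first formula. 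Dividing the same relation through by $\tfrac{a_{n_1,n_2}}{a_{n_2,n_1}}$ (legitimate since this quantity is nonzero) rewrites the bracket in terms of $v_{n_2}$ and gives the second formula after noting $\tfrac{a_{n_3,n_1}}{a_{n_1,n_3}}\cdot\tfrac{a_{n_1,n_2}}{a_{n_2,n_1}}$ appears as the coefficient of $v_{n_3}/v_{n_2}$; the third is symmetric. Here the consistency condition (Theorem~\ref{thm:charmat}(2)), via Remark~\ref{rem:eqcon}, guarantees that the alternative route relating $v_{n_2}$ and $v_{n_3}$ through $a_{n_2,n_3}, a_{n_3,n_2}$ (when those are nonzero) produces the same ratio, so the formulas are unambiguous regardless of which spanning tree of $G'$ one uses.

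I do not expect a serious obstacle here; the only point requiring a little care is the bookkeeping in re-expressing each denominator so that it is manifestly the reciprocal coefficient of the corresponding $v_{n_i}$, and confirming that the three Cayley-formula cases really do collapse to this single statement once the central vertex of the path is designated $n_1$. The argument is essentially the $m=3$ specialization of Equation~\eqref{eq:an1an2} together with the normalization $\sum_i v_i = |V|$.
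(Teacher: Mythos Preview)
Your proposal is correct and follows essentially the same route as the paper: identify the central color $n_1$ of a spanning path in $G'$, use the edge-counting identities $a_{n_1,i}v_{n_1}=a_{i,n_1}v_i$ to express $v_{n_2}$ and $v_{n_3}$ in terms of $v_{n_1}$, substitute into $|V|=v_{n_1}+v_{n_2}+v_{n_3}$, and invoke consistency (Remark~\ref{rem:eqcon}) to ensure the result is independent of the chosen spanning tree. The paper's sketch is virtually identical, including the observation that $a_{n_2,n_3}$ may vanish so one must relate $v_{n_2}$ and $v_{n_3}$ through $n_1$.
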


The permutation referred to in Lemma \ref{lem:vercount3} is determined by a spanning tree in $G'$. 
As for the values enumerated, the proof goes along the lines of counting the total number 
of vertices as $|V|=v_{n_{1}}+v_{n_{2}}+v_{n_{3}}$, considering that we have 
$a_{n_{1},i}v_{n_{1}}=a_{i,n_{1}}v_i$, hence $|V|=v_{n_{1}} + 
\frac{a_{n_{1},n_{2}}}{a_{n_{2},n_{1}}}v_{n_{1}} + \frac{a_{n_{1},n_{3}}}{a_{n_{3},n_{1}}}v_{n_{1}}$. 
Here, $a_{n_{2},n_{3}}$ and $a_{n_{3},n_{2}}$ may equal zero. Hence they cannot 
necessarily be used to relate $v_{n_{2}}$ and $v_{n_{3}}$. But, as in the counting procedure 
in the proof of Theorem \ref{thm:charmat}, one obtains 
$a_{n_{2},n_{1}}a_{n_{1},n_{3}}v_{n_{2}}=a_{n_{1},n_{2}}a_{n_{3},n_{1}}v_{n_{3}}$, and 
consequently the expressions for $v_{n_{2}}$ and $v_{n_{3}}$ above. 

Note that the permutation in Lemma~\ref{lem:vercount3} is not necessarily unique.
But the consistency condition of Theorem~\ref{thm:charmat} ensures that the values obtained
are independent of the choice of permutation. We note that Proposition 2.1 in \cite{AA},
which distinguished the mutually exclusive possibilities,
follows from Theorem~\ref{thm:charmat} and Lemma~\ref{lem:vercount3}.

For the four color case, there are two possible spanning trees up to isomorphism, 
namely, a star graph with three leaves, or a path on four vertices. The
sixteen cases from Cayley's tree formula break down into four star graphs as there 
are four choices for the central vertex, and twelve paths arising from the different 
ways of arranging four objects in a row, up to reversal of order.

\begin{lemma}\label{lem:vercount4}
Let $A= (a_{ij}) \in \N^{4 \times 4}$ be a color adjacency matrix of some
connected graph $G=(V,E)$. Then there is a permutation $(n_{1},n_{2},n_{3},n_{4})$ of 
$(1,2,3,4)$ such that $a_{n_{1},n_{2}}a_{n_{1},n_{3}}a_{n_{1},n_{4}} \neq 0$
or $a_{n_{1},n_{2}}a_{n_{2},n_{3}}a_{n_{3},n_{4}} \neq 0$. 
Let $v_i$ denote the number of vertices of color $i$. 
\begin{enumerate}
\item If $a_{n_{1},n_{2}}a_{n_{1},n_{3}}a_{n_{1},n_{4}} \neq 0$, then
\begin{align*} 
v_{n_{1}} & = \frac{|V|}{1+\frac{a_{n_{1},n_{2}}}{a_{n_{2},n_{1}}}
+\frac{a_{n_{1},n_{3}}}{a_{n_{3},n_{1}}}+\frac{a_{n_{1},n_{4}}}{a_{n_{4},n_{1}}}}, \\
v_{n_{2}} & = \frac{|V|}{\frac{a_{n_{2},n_{1}}}{a_{n_{1},n_{2}}}+1
+\frac{a_{n_{2},n_{1}}a_{n_{1},n_{3}}}{a_{n_{1},n_{2}}a_{n_{3},n_{1}}}
+\frac{a_{n_{2},n_{1}}a_{n_{1},n_{4}}}{a_{n_{1},n_{2}}a_{n_{4},n_{1}}}},\\
v_{n_{3}} &= \frac{|V|}{\frac{a_{n_{3},n_{1}}}{a_{n_{1},n_{3}}}
+\frac{a_{n_{3},n_{1}}a_{n_{1},n_{2}}}{a_{n_{1},n_{3}}a_{n_{2},n_{1}}}+1
+\frac{a_{n_{3},n_{1}}a_{n_{1},n_{4}}}{a_{n_{1},n_{3}}a_{n_{4},n_{1}}}},\\
v_{n_{4}} & = \frac{|V|}{\frac{a_{n_{4},n_{1}}}{a_{n_{1},n_{4}}}
+\frac{a_{n_{4},n_{1}}a_{n_{1},n_{2}}}{a_{n_{1},n_{4}}a_{n_{2},n_{1}}}
+\frac{a_{n_{4},n_{1}}a_{n_{1},n_{3}}}{a_{n_{1},n_{4}}a_{n_{3},n_{1}}}+1}.
\end{align*}
\item If $a_{n_{1},n_{2}}a_{n_{2},n_{3}}a_{n_{3},n_{4}}\neq 0$, then
\begin{align*} 
v_{n_{1}} & = \frac{|V|}{1+\frac{a_{n_{1},n_{2}}}{a_{n_{2},n_{1}}}
+\frac{a_{n_{1},n_{2}}a_{n_{2},n_{3}}}{a_{n_{2},n_{1}}a_{n_{3},n_{2}}}
+\frac{a_{n_{1},n_{2}}a_{n_{2},n_{3}}a_{n_{3},n_{4}}}{a_{n_{2},n_{1}}a_{n_{3},n_{2}}a_{n_{4},n_{3}}}}, \\
v_{n_{2}} & = \frac{|V|}{\frac{a_{n_{2},n_{1}}}{a_{n_{1},n_{2}}}
+1+\frac{a_{n_{2},n_{3}}}{a_{n_{3},n_{2}}}
+\frac{a_{n_{2},n_{3}}a_{n_{3},n_{4}}}{a_{n_{3},n_{2}}a_{n_{4},n_{3}}}},\\
v_{n_{3}} &= \frac{|V|}{\frac{a_{n_{3},n_{2}}a_{n_{2},n_{1}}}{a_{n_{2},n_{3}}a_{n_{1},n_{2}}}
+\frac{a_{n_{3},n_{2}}}{a_{n_{2},n_{3}}}+1
+\frac{a_{n_{3},n_{4}}}{a_{n_{4},n_{3}}}},\\
v_{n_{4}} &= \frac{|V|}{\frac{a_{n_{4},n_{3}}a_{n_{3},n_{2}}a_{n_{2},n_{1}}}{a_{n_{3},n_{4}}a_{n_{2},n_{3}}a_{n_{1},n_{2}}}+
\frac{a_{n_{4},n_{3}}a_{n_{3},n_{2}}}{a_{n_{3},n_{4}}a_{n_{2},n_{3}}}+\frac{a_{n_{4},n_{3}}}{a_{n_{3},n_{4}}}+1}.
\end{align*}
\end{enumerate}
\end{lemma}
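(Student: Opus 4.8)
The plan is to mimic the argument sketched for Lemma~\ref{lem:vercount3}, now with four colors. As in Remark~\ref{rem:conn}, let $G'$ be the directed multigraph on vertex set $\{V_1,V_2,V_3,V_4\}$ with adjacency matrix $A$; since $G$ is connected, so is $G'$, hence $G'$ (equivalently its underlying simple graph $G''$) contains a spanning tree. A tree on four vertices is, up to isomorphism, either the star with three leaves or the path on four vertices. Relabelling the colors by the corresponding permutation $(n_1,n_2,n_3,n_4)$ — the center first in the star case, an endpoint first in the path case — yields precisely the dichotomy $a_{n_1,n_2}a_{n_1,n_3}a_{n_1,n_4}\neq 0$ or $a_{n_1,n_2}a_{n_2,n_3}a_{n_3,n_4}\neq 0$ claimed in the statement, since an edge of $G'$ between $V_i$ and $V_j$ forces both $a_{ij}\neq 0$ and $a_{ji}\neq 0$ by weak symmetry.

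The single computational tool is the edge-count identity $a_{ij}v_i=a_{ji}v_j$ (double counting the edges between $V_i$ and $V_j$), so whenever $a_{ij}\neq 0$ we may write $v_j=\frac{a_{ij}}{a_{ji}}v_i$. In case (1) I would fix the center $n_1$ and use the three tree edges $\{n_1,n_k\}$ to express $v_{n_k}=\frac{a_{n_1,n_k}}{a_{n_k,n_1}}v_{n_1}$ for $k=2,3,4$; substituting into $|V|=v_{n_1}+v_{n_2}+v_{n_3}+v_{n_4}$ solves for $v_{n_1}$, and the displayed formulas for $v_{n_2},v_{n_3},v_{n_4}$ follow by re-expressing $v_{n_1}$ through the relevant edge and then propagating. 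In case (2) I would instead walk along the path $n_1\!-\!n_2\!-\!n_3\!-\!n_4$, obtaining successively $v_{n_2}=\frac{a_{n_1,n_2}}{a_{n_2,n_1}}v_{n_1}$, then $v_{n_3}=\frac{a_{n_1,n_2}a_{n_2,n_3}}{a_{n_2,n_1}a_{n_3,n_2}}v_{n_1}$, then $v_{n_4}=\frac{a_{n_1,n_2}a_{n_2,n_3}a_{n_3,n_4}}{a_{n_2,n_1}a_{n_3,n_2}a_{n_4,n_3}}v_{n_1}$, and again summing to $|V|$ and re-centering at each $n_k$ gives the four displayed expressions. All of this is routine algebra; in the write-up I would present the derivation for one representative $v_{n_k}$ in each case and leave the rest to the reader.

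The only point deserving a remark is well-definedness: $G'$ may have many spanning trees, hence many admissible permutations, but by the consistency condition (2) of Theorem~\ref{thm:charmat} (see Remark~\ref{rem:eqcon}) any two products of $a$-entries relating $v_i$ to $v_j$ agree, so the value of each $v_{n_k}$ does not depend on the chosen tree. I do not anticipate a genuine obstacle here: the sixteen cases of Cayley's formula collapse to the two isomorphism types above, and once the spanning tree is fixed the computation is mechanical. The mild bookkeeping nuisance is keeping the four ``re-centered'' denominators in each case consistent with the statement, which I would handle by first writing everything in terms of $v_{n_1}$ and only at the end dividing through.
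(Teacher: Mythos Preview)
Your proposal is correct and follows essentially the same approach as the paper: reduce to the two spanning-tree shapes on four vertices (star and path), use the edge-count identity $a_{ij}v_i=a_{ji}v_j$ along the tree edges to express every $v_{n_k}$ in terms of $v_{n_1}$, sum to $|V|$, and invoke the consistency condition for well-definedness. The paper's own argument is just a one-sentence analogy to Lemma~\ref{lem:vercount3}, so your write-up is if anything more detailed than what appears there.
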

The proof of this lemma is in complete analogy to the proof of Lemma~\ref{lem:vercount3}:
We count $|V|$ by $|V|=v_{n_1}+v_{n_2}+v_{n_3}+v_{n_4}$. Then we express for instance 
$v_{n_1}$ in terms of $v_{n_2}, v_{n_3}, v_{n_4}$ using the appropriate
(nonzero) $a_{ij}$'s, depending on the possible spanning tree for $G''$. 

\section{Implementation} \label{sec:implement}
Let $G$ be a $k$-regular connected graph and $A \in \N^{m\times m}$ be a 
color adjacency matrix for a perfect $m$-coloring of $G$. Clearly 
each row sum of $A$ equals $k$. Note that for each row there are 
$\binom{k+m-1}{m-1}$ different ways to distribute the entries such 
that the row sum equals $k$. Hence there are $\binom{k+m-1}{m-1}^m$ matrices
to consider altogether. 

The conditions above yield the following procedure to enumerate 
all color adjacency matrices for connected regular graphs. We need $m(m-1)$ nested 
loops to go through all matrices $A=(a_{ij}) \in \N^{m \times m}$ 
with constant row sum $k$.

\begin{enumerate}
\item Check for $i \neq j$ whether it is true that ``$a_{ij}=0$ if and only if $a_{ji}=0$''
(weak symmetry condition of Theorem \ref{thm:charmat}).

\item Ensure connectedness by applying Remark~\ref{rem:conn}.
For $m=2$, this means $a_{12}$ must be nonzero; for $m=3$ and $m=4$, 
we find the right permutation such that the product in the condition of
the corresponding lemma is nonzero. 

\item Check whether the consistency condition of Theorem \ref{thm:charmat} is satisfied 
by going through all relevant products. Several of these products
may be zero, but connectedness implies that there is a way to relate any $v_{r}$ 
and $v_{s}$ by products of nonzero $a_{ij}$'s as in Remark~\ref{rem:eqcon}. \medskip

\noindent Performing the preceding steps for given $m$ and $k$ yields all color adjacency matrices
for perfect $m$-colorings of connected $k$-regular graphs. The following steps are merely 
for removal of matrices that essentially the same partitions, just with the 
colors permuted.  Without loss of generality, we also adopt the convention that 
$v_{i} \leq v_{i+1}$ for $i < m$.

\item We identify a suitable case in Lemma~\ref{lem:vercount2}, 
\ref{lem:vercount3}, or \ref{lem:vercount4} that $A$ satisfies. 
Again, when using Lemma~\ref{lem:vercount3} or \ref{lem:vercount4}, 
by the consistency condition, it is
enough to consider only one spanning tree of $G'$. 
Observe that for each $i$, the value of $\dfrac{v_{i}}{|V|}$ depends
only on the entries $a_{ij}$. We check whether these expressions are in 
nondecreasing order when arranged according to increasing $i$. For the two-color case, 
it suffices to check if $a_{12} \leq a_{21}$. 

\item Finally, we identify matrices if they are conjugate via a permutation matrix.
\end{enumerate}

We summarize our procedure in Algorithm \ref{alg:cam}.

\begin{algorithm}
  \caption{Generate color adjacency matrices of perfect $m$-colorings of connected $k$-regular graphs}\label{alg:cam}
  \begin{algorithmic}[1]
    \Procedure{CAM}{$m,k$}
      \State $L' = $ empty list 
      \For{$A \in \N^{m \times m}$ with constant row sum $k$}
        \If{$a_{ij}=0$ if and only if $a_{ji}=0$ for $i \neq j$}
          \If{there exists $(m-1)$-tuple satisfying Remark~\ref{rem:conn}}
          	\If{for every nontrivial cycle in $S_{m}$, the equation in condition (2) of Theorem~\ref{thm:charmat} is satisfied}
          	  \If{$\frac{v_{i}}{|V|} \leq \frac{v_{i+1}}{|V|}$ for each $i<m$ in Lemma \ref{lem:vercount2}, \ref{lem:vercount3}, or \ref{lem:vercount4}}
          		\State{Add $A$ to $L'$}
          	  \EndIf
          	\EndIf
          \EndIf
        \EndIf
      \EndFor
      \State $L = $ list containing first element of $L'$
      \For{$A \in L'$, starting with second element}
        \If{for each permutation matrix $P$, $PAP^{-1}$ is not equal to any element of $L$}
          \State{Add A to $L$}
        \EndIf
      \EndFor
      \State \textbf{return} $L$
    \EndProcedure
  \end{algorithmic}
\end{algorithm}

The tests above were implemented both in \texttt{scilab} and
\texttt{sagemath} \cite{S}. The \texttt{sagemath} worksheets are available
for download \cite{sws}. There are three worksheets, one for each number
of colors. The worksheets are organized in sections, one for each degree
$k$ of regularity ($k \in \{3,4,5\}$). The comments in the code indicate the 
different cases and tests. After executing all cells in all sections in
the worksheet the list \texttt{l}
contains all color adjacency matrices passing the tests (1.)-(5.) for the respective
value of $k$. Each section contains further code to determine all perfect
colorings of Platonic graphs, see Section \ref{sec:platonic}. 

The worksheets for two and three colors will need at most a few minutes
computing time on an ordinary laptop or desktop computer. The worksheets
for four colors need several hours of computation on a modern laptop.
The most time-consuming part is step 5. Therefore we also provide a sage
data file and a pdf file containing all color adjacency matrices for
download \cite{sws}. One can download the sage data file (for instance
\texttt{4col-list.sage}), store them in some folder (for instance
\texttt{/home/user/sage}) and load the content into any sage worksheet 
using \texttt{open('/home/user/sage/4col-list.sage')}, for instance.
After executing the above command, the list \texttt{l43} contains all
color adjacency matrices for perfect 4-colorings of 3-regular graphs,
\texttt{l44} contains all color adjacency matrices for perfect 4-colorings 
of 4-regular graphs, and \texttt{l45} the corresponding list 
for perfect 4-colorings of 5-regular graphs. These lists can then be
processed further, as seen in the examples in Section~\ref{sec:platonic}. 

\section{Color adjacency matrices of $k$-regular graphs}\label{sec:enum}

Using these criteria all color adjacency matrices $A$ for perfect 
2-colorings of $k$-regular graphs with $k \in \{3,4,5\}$ are only the ones
listed in the Table \ref{tab:2-col}.

\begin{table}[h]
\begin{center}
\begin{tabular}{c|l}
$k$ & \hspace{2in} $A$ \\ \hline
3 & $\big( \begin{smallmatrix} 0 & 3\\ 1 & 2 \end{smallmatrix} \big)$,
 $\big( \begin{smallmatrix} 0 & 3\\ 2 & 1 \end{smallmatrix} \big)$,
$\big( \begin{smallmatrix} 0 & 3\\ 3 & 0 \end{smallmatrix} \big)$,
$\big( \begin{smallmatrix} 1 & 2\\ 1 & 2 \end{smallmatrix} \big)$,
$\big( \begin{smallmatrix} 1 & 2\\ 2 & 1 \end{smallmatrix} \big)$,
$\big( \begin{smallmatrix} 2 & 1\\ 1 & 2 \end{smallmatrix} \big)$\\
  \hline
4 & $\big( \begin{smallmatrix} 0 & 4\\ 1 & 3 \end{smallmatrix} \big)$,
 $\big( \begin{smallmatrix} 0 & 4\\ 2 & 2 \end{smallmatrix} \big)$,
$\big( \begin{smallmatrix} 0 & 4\\ 3 & 1 \end{smallmatrix} \big)$,
$\big( \begin{smallmatrix} 0 & 4\\ 4 & 0 \end{smallmatrix} \big)$,
$\big( \begin{smallmatrix} 1 & 3\\ 1 & 3 \end{smallmatrix} \big)$,
$\big( \begin{smallmatrix} 1 & 3\\ 2 & 2 \end{smallmatrix} \big)$,
$\big( \begin{smallmatrix} 1 & 3\\ 3 & 1 \end{smallmatrix} \big)$,
$\big( \begin{smallmatrix} 2 & 2\\ 1 & 3 \end{smallmatrix} \big)$,
$\big( \begin{smallmatrix} 2 & 2\\ 2 & 2 \end{smallmatrix} \big)$,
$\big( \begin{smallmatrix} 3 & 1\\ 1 & 3 \end{smallmatrix} \big)$\\
  \hline
5 & $\big( \begin{smallmatrix} 0 & 5\\ 1 & 4 \end{smallmatrix} \big)$,
 $\big( \begin{smallmatrix} 0 & 5\\ 2 & 3 \end{smallmatrix} \big)$,
$\big( \begin{smallmatrix} 0 & 5\\ 3 & 2 \end{smallmatrix} \big)$,
$\big( \begin{smallmatrix} 0 & 5\\ 4 & 1 \end{smallmatrix} \big)$,
$\big( \begin{smallmatrix} 0 & 5\\ 5 & 0 \end{smallmatrix} \big)$,
$\big( \begin{smallmatrix} 1 & 4\\ 1 & 4 \end{smallmatrix} \big)$,
 $\big( \begin{smallmatrix} 1 & 4\\ 2 & 3 \end{smallmatrix} \big)$,
$\big( \begin{smallmatrix} 1 & 4\\ 3 & 2 \end{smallmatrix} \big)$,
$\big( \begin{smallmatrix} 1 & 4\\ 4 & 1 \end{smallmatrix} \big)$,
$\big( \begin{smallmatrix} 2 & 3\\ 1 & 4 \end{smallmatrix} \big)$,\\
 & $\big( \begin{smallmatrix} 2 & 3\\ 2 & 3 \end{smallmatrix} \big)$,
 $\big( \begin{smallmatrix} 2 & 3\\ 3 & 2 \end{smallmatrix} \big)$,
$\big( \begin{smallmatrix} 3 & 2\\ 1 & 4 \end{smallmatrix} \big)$,
$\big( \begin{smallmatrix} 3 & 2\\ 2 & 3 \end{smallmatrix} \big)$,
$\big( \begin{smallmatrix} 4 & 1\\ 1 & 4 \end{smallmatrix} \big)$
\end{tabular}
\end{center}
\caption{All color adjacency matrices $A$ for $k$-regular graphs with two colors.
\label{tab:2-col}}
\end{table}

All color adjacency matrices $A$ for perfect 3-colorings 
of $k$-regular graphs with $k \in \{3,4,5\}$ are given in Appendix A.
There are 18 possible matrices for 3-regular graphs, 64 for 4-regular 
graphs, and 153 for 5-regular graphs. 

The lists of all color adjacency matrices $A$ for perfect 4-colorings 
of $k$-regular graphs with $k \in \{3,4,5\}$ are quite long: there are
72 matrices for 3-regular graphs, 485 for 4-regular graphs, and
2042 for 5-regular graphs.  They are available online at \cite{sws} in
two forms: as a list in \texttt{pdf}, and as a loadable \texttt{sage} data file,
see Section \ref{sec:implement}.
Table \ref{tab:compare} below compares the number of all matrices in $\N^{m \times m}$
with all row sums equal to $k$ with the number of all color adjacency matrices
for perfect colorings for 4-colorings of $k$-regular graphs with $k \in \{3,4,5\}$. 

\begin{table}[h]
\begin{center}
\begin{tabular}{c|ccc}
$m$ \textbackslash \, $k$ & 3 & 4 & 5\\
\hline
2 & 6 of 16 & 10 of 25 & 15 of 36\\
3 & 18 of 1000 & 64 of 3375 & 153 of 9261\\
4 & 72 of 16 000 & 485 of 1 500 625 & 2042 of 9 834 496
\end{tabular}
\end{center}
\caption{A comparison of the number of all color adjacency matrices
  for perfect colorings of connected graphs (passing the tests (1.)-(5.))
  with the number of matrices in 
$\N^{m \times m}$ with all row sums equal to $k$. \label{tab:compare}}
\end{table}

\section{Perfect colorings of Platonic graphs} \label{sec:platonic}

Theorem~\ref{thm:eigenv} may now be used as a further necessary criterion for
possible color adjacency matrices for a particular graph $G$. We illustrate this 
with the Platonic graphs (i.e., the edge graphs of the Platonic solids).
The eigenvalues of these graphs  are given in Table \ref{tab:eigen}. These values
can be found for instance in \cite{CDS}. An entry $a^n$ means that
$a$ is an eigenvalue of algebraic multiplicity $n$.

\begin{table}[h]
\[ \begin{array}{l|l}
\hspace{1cm} G & \hspace{1.5cm} \mbox{eigenvalues} \\
\hline
\mbox{tetrahedron} & -1^3,3 \\ 
\mbox{cube} & -3,-1^3,1^3,3 \\
\mbox{octahedron} & -2^2,0^3,4 \\ 
\mbox{dodecahedron} & -\sqrt{5}^{\, 3}, -2^4, 0^4, 1^5, \sqrt{5}^{\, 3}, 3\\
\mbox{icosahedron}  & -\sqrt{5}^{\, 3}, -1^5, \sqrt{5}^{\, 3}, 5
\end{array}  \]
\caption{The eigenvalues of the Platonic graphs. A superscript denotes
the multiplicity of the respective eigenvalue. \label{tab:eigen}}
\end{table}

It follows from Theorem~\ref{thm:eigenv} that to determine all perfect 
colorings of the Platonic graphs with two colors one can check which of the
matrices in Table~\ref{tab:2-col} have eigenvalues in the respective spectrum
of the Platonic graphs. This is the actual test we 
implemented in \texttt{sagemath}. One could refine it in order to include 
counting the multiplicities, but we found by inspection that for these graphs
the latter condition does not exclude further matrices.

\subsection{The perfect 2-colorings of Platonic graphs} \label{sec:2colPla}
By the methods described above, we obtained a list of all color adjacency 
matrices for perfect 2-colorings of $k$-regular graphs for $k \in \{3,4,5\}$,
see Table \ref{tab:2-col}.
For each matrix in each of these lists we now check whether the corresponding 
expressions for $v_i$ in Lemma \ref{lem:vercount2} are integers, and whether the eigenvalues
of the matrix are eigenvalues of the Platonic graph under consideration.
For example, since the cube graph is 3-regular, we check for all six matrices
in the first row of Table \ref{tab:2-col} whether the expressions in Lemma 
\ref{lem:vercount2} are all integers, and whether all eigenvalues of the matrix
are contained in $\{-3, -1, 1, 3\}$. In this manner 
we obtained the following candidates for color adjacency matrices for 2-colorings
of the Platonic graphs, respectively.
\begin{enumerate}
\item Tetrahedron:
$\big( \begin{smallmatrix} 0 & 3\\ 1 & 2 \end{smallmatrix} \big),
\big( \begin{smallmatrix} 1 & 2\\ 2 & 1 \end{smallmatrix} \big)$
\item Cube:
$\big( \begin{smallmatrix} 0 & 3\\ 1 & 2 \end{smallmatrix} \big),
\big( \begin{smallmatrix} 0 & 3\\ 3 & 0 \end{smallmatrix} \big),
\big( \begin{smallmatrix} 1 & 2\\ 2 & 1 \end{smallmatrix} \big),
\big( \begin{smallmatrix} 2 & 1\\ 1 & 2 \end{smallmatrix} \big)$
\item Octahedron:
$\big( \begin{smallmatrix} 0 & 4\\ 2 & 2 \end{smallmatrix} \big),
\big( \begin{smallmatrix} 1 & 3\\ 3 & 1 \end{smallmatrix} \big)^{\dagger},
\big( \begin{smallmatrix} 2 & 2\\ 2 & 2 \end{smallmatrix} \big)$
\item Dodecahedron: 
$\big( \begin{smallmatrix} 0 & 3\\ 2 & 1 \end{smallmatrix} \big),
\big( \begin{smallmatrix} 2 & 1\\ 1 & 2 \end{smallmatrix} \big)$
\item Icosahedron:
$\big( \begin{smallmatrix} 0 & 5\\ 1 & 4 \end{smallmatrix} \big),
\big( \begin{smallmatrix} 1 & 4\\ 2 & 3 \end{smallmatrix} \big),
\big( \begin{smallmatrix} 2 & 3\\ 3 & 2 \end{smallmatrix} \big)$
\end{enumerate}
For the tetrahedron, the cube, the dodecahedron, and the icosahedron,
all possible color adjacency matrices in the list above actually correspond 
to perfect 2-colorings. These colorings are shown in Figures
\ref{fig:tetrahedron}, \ref{fig:cube}, \ref{fig:dodecahedron} and 
\ref{fig:icosahedron}.
For the octahedron there are only two perfect 2-colorings, shown
in Figure \ref{fig:octahedron}. In this case, one of the
matrices above does not correspond to a perfect 2-coloring of the octahedral graph:
the matrix marked
with $\dagger$ can be checked to be impossible in a straightforward manner
by attempting to color the vertices of an octahedral graph according to 
these color adjacencies. One may also argue combinatorially: if this matrix 
is a color adjacency matrix for a connected graph $G$, 
then $G$ must have at least $8$ vertices.
This is because $a_{12}=a_{21}=3$ imply $v_{1}$, $v_{2} \geq 3$ and
$a_{11}=a_{22}=1$ imply $v_{1}$ and $v_{2}$ must be even. Thus,
$G$ cannot be the octahedral graph.
In any case, the list confirms the results in \cite{AK}.

\subsection{The perfect 3-colorings of Platonic graphs}
Applying the analogous procedure, and with Lemma \ref{lem:vercount3} rather than
Lemma \ref{lem:vercount2}, we obtain a list of all color adjacency 
matrices for 3-colorings of the Platonic graphs, respectively. In this case,
all candidates are valid color adjacency matrices for perfect colorings of
Platonic graphs.

\begin{enumerate}
\item Tetrahedron:
$\left( \begin{smallmatrix} 0 & 1 & 2\\ 1 & 0 & 2 \\ 1 & 1 & 1 \end{smallmatrix} \right)$
\item Cube:
$\left( \begin{smallmatrix} 0 & 1 & 2\\ 1 & 0 & 2 \\ 1 & 1 & 1 \end{smallmatrix} \right),
\left( \begin{smallmatrix} 1 & 0 & 2\\ 0 & 1 & 2\\ 1 & 1 & 1 \end{smallmatrix} \right)$
\item Octahedron:
$\left( \begin{smallmatrix} 0 & 0 & 4\\ 0 & 0 & 4\\ 1 & 1 & 2 \end{smallmatrix} \right),
\left( \begin{smallmatrix} 0 & 2 & 2\\ 2 & 0 & 2 \\ 2 & 2 & 0 \end{smallmatrix} \right),
\left( \begin{smallmatrix} 0 & 2 & 2\\ 2 & 1 & 1 \\ 2 & 1 & 1\end{smallmatrix} \right)$
\item Dodecahedron: 
$\left( \begin{smallmatrix} 0 & 0 & 3\\ 0 & 0 & 3\\ 1 & 1 & 1 \end{smallmatrix} \right),
\left( \begin{smallmatrix} 0 & 3 & 0\\ 1 & 0 & 2\\ 0 & 1 & 2 \end{smallmatrix} \right),
\left( \begin{smallmatrix} 1 & 0 & 2\\ 0 & 1 & 2\\ 1 & 2 & 0 \end{smallmatrix} \right)$
\item Icosahedron:
$\left( \begin{smallmatrix} 0 & 1 & 4\\ 1 & 0 & 4\\ 1 & 1 & 3 \end{smallmatrix} \right),
\left( \begin{smallmatrix} 0 & 2 & 3\\ 1 & 1 & 3\\ 1 & 2 & 2 \end{smallmatrix} \right),
\left( \begin{smallmatrix} 1 & 2 & 2\\ 2 & 1 & 2\\ 2 & 2 & 1 \end{smallmatrix} \right)$
\end{enumerate}
The perfect colorings corresponding to the color adjacency matrices above are
shown in Figures \ref{fig:tetrahedron}-\ref{fig:icosahedron}. This list
corrects a preprint version of \cite{AA} by providing the three cases missing there, namely
$\big( \begin{smallmatrix} 0 & 2 & 2\\ 2 & 0 & 2 \\ 2 & 2 & 0 \end{smallmatrix} \big)$ 
for the octahedral graph and 
$\big( \begin{smallmatrix} 0 & 3 & 0\\ 1 & 0 & 2\\ 0 & 1 & 2 \end{smallmatrix} \big)$
and $\big( \begin{smallmatrix} 1 & 0 & 2\\ 0 & 1 & 2\\ 1 & 2 & 0 \end{smallmatrix} \big)$
for the dodecahedral graph. The final version of \cite{AA} is correct.

\subsection{The perfect 4-colorings of Platonic graphs}

We obtained in a similar manner the following candidates for color adjacency matrices 
for 4-colorings of the Platonic graphs, respectively.
\begin{enumerate}
\item Tetrahedron:
$\left(\begin{smallmatrix}0&1&1&1\\ 1&0&1&1\\ 1&1&0&1\\ 1&1&1&0\\ \end{smallmatrix}\right)$
\item Cube:
$\left(\begin{smallmatrix}0&0&0&3\\ 0&0&3&0\\ 0&1&0&2\\ 1&0&2&0\\ \end{smallmatrix}\right)$,
$\left(\begin{smallmatrix}0&0&1&2\\ 0&0&2&1\\ 1&2&0&0\\ 2&1&0&0\\ \end{smallmatrix}\right)$,
$\left(\begin{smallmatrix}0&1&1&1\\ 1&0&1&1\\ 1&1&0&1\\ 1&1&1&0\\ \end{smallmatrix}\right)$,
$\left(\begin{smallmatrix}0&1&1&1\\ 1&0&1&1\\ 1&1&1&0\\ 1&1&0&1\\ \end{smallmatrix}\right)$,
$\left(\begin{smallmatrix}1&0&1&1\\ 0&1&1&1\\ 1&1&1&0\\ 1&1&0&1\\ \end{smallmatrix}\right)$
\item Octahedron:
$\left(\begin{smallmatrix}0&0&2&2\\ 0&0&2&2\\ 1&1&0&2\\ 1&1&2&0\\ \end{smallmatrix}\right)$,
$\left(\begin{smallmatrix}0&0&2&2\\ 0&0&2&2\\ 1&1&1&1\\ 1&1&1&1\\ \end{smallmatrix}\right)$
\item Dodecahedron: 
$\left(\begin{smallmatrix}0&0&0&3\\ 0&0&2&1\\ 0&2&0&1\\ 1&1&1&0\\ \end{smallmatrix}\right)$,
$\left(\begin{smallmatrix}0&0&0&3\\ 0&1&1&1\\ 0&1&1&1\\ 1&1&1&0\\ \end{smallmatrix}\right)$,
$\left(\begin{smallmatrix}0&0&1&2\\ 0&0&1&2\\ 1&1&1&0\\ 1&1&0&1\\ \end{smallmatrix}\right)$,
$\left(\begin{smallmatrix}0&0&1&2\\ 0&2&0&1\\ 1&0&2&0\\ 2&1&0&0\\ \end{smallmatrix}\right)$,
$\left(\begin{smallmatrix}1&0&0&2\\ 0&1&0&2\\ 0&0&1&2\\ 1&1&1&0\\ \end{smallmatrix}\right)$
\item Icosahedron:
$\left(\begin{smallmatrix}0&0&0&5\\ 0&0&5&0\\ 0&1&2&2\\ 1&0&2&2\\ \end{smallmatrix}\right)$,
$\left(\begin{smallmatrix}0&1&1&3\\ 1&0&1&3\\ 1&1&0&3\\ 1&1&1&2\\ \end{smallmatrix}\right)$,
$\left(\begin{smallmatrix}0&1&2&2\\ 1&0&2&2\\ 1&1&1&2\\ 1&1&2&1\\ \end{smallmatrix}\right)$,
$\left(\begin{smallmatrix}0&1&2&2\\ 1&2&0&2\\ 2&0&2&1\\ 2&2&1&0\\ \end{smallmatrix}\right)$
\end{enumerate}
All of these candidates have corresponding perfect colorings, and these
are shown in Figures \ref{fig:tetrahedron}-\ref{fig:icosahedron}, respectively.

\section{Further questions}

The results and methods above give rise to several questions.

\begin{enumerate}
\item Using the lists of realizable color adjacency matrices generated 
in Section~\ref{sec:enum}, one may also try to determine
the perfect colorings of other regular graphs starting with
special classes of graphs, say the Archimedean graphs. These graphs are all
regular with valency at most $5$. 
\item The matrix marked $\dagger$ in Section~\ref{sec:2colPla} could have been
excluded from the list by adding conditions checking if the order of the
color class $V_{i}$ is even if the corresponding diagonal entry is odd. 
Then, the procedure becomes sufficient to enumerate
the realizable color adjacency matrices for Platonic graphs. It would be interesting
to understand why this is so, and to characterize all regular graphs for which this 
modified method is sufficient. 
\item Recall that not all perfect colorings 
correspond to orbit partitions. We then ask if
there are conditions under which a given realizable color adjacency matrix
corresponds to an orbit partition of a graph. For this question it might be
instructive to start with graphs possessing high degrees of symmetry, vertex
transitivity, and edge transitivity.
\end{enumerate}


\bibliographystyle{amsplain}


\section*{Appendix A: All color adjacency matrices for 3-colorings}
All color adjacency matrices $A$ for perfect 3-colorings 
of $k$-regular graphs with $k \in \{3,4,5\}$:

\parindent0em

\medskip

\underline{3-regular graphs:}

$\left(\begin{smallmatrix}0&0&3\\ 0&0&3\\ 1&1&1\\ \end{smallmatrix}\right)$
$\left(\begin{smallmatrix}0&0&3\\ 0&0&3\\ 1&2&0\\ \end{smallmatrix}\right)$
$\left(\begin{smallmatrix}0&0&3\\ 0&1&2\\ 1&1&1\\ \end{smallmatrix}\right)$
$\left(\begin{smallmatrix}0&0&3\\ 0&1&2\\ 1&2&0\\ \end{smallmatrix}\right)$
$\left(\begin{smallmatrix}0&0&3\\ 0&2&1\\ 1&1&1\\ \end{smallmatrix}\right)$
$\left(\begin{smallmatrix}0&0&3\\ 0&2&1\\ 2&1&0\\ \end{smallmatrix}\right)$
$\left(\begin{smallmatrix}0&1&2\\ 1&0&2\\ 1&1&1\\ \end{smallmatrix}\right)$
$\left(\begin{smallmatrix}0&1&2\\ 1&1&1\\ 2&1&0\\ \end{smallmatrix}\right)$
$\left(\begin{smallmatrix}0&1&2\\ 1&2&0\\ 1&0&2\\ \end{smallmatrix}\right)$
$\left(\begin{smallmatrix}0&1&2\\ 1&2&0\\ 2&0&1\\ \end{smallmatrix}\right)$
$\left(\begin{smallmatrix}0&3&0\\ 1&0&2\\ 0&1&2\\ \end{smallmatrix}\right)$
$\left(\begin{smallmatrix}1&0&2\\ 0&0&3\\ 1&2&0\\ \end{smallmatrix}\right)$
$\left(\begin{smallmatrix}1&0&2\\ 0&1&2\\ 1&1&1\\ \end{smallmatrix}\right)$
$\left(\begin{smallmatrix}1&0&2\\ 0&1&2\\ 1&2&0\\ \end{smallmatrix}\right)$
$\left(\begin{smallmatrix}1&0&2\\ 0&2&1\\ 1&1&1\\ \end{smallmatrix}\right)$
$\left(\begin{smallmatrix}1&1&1\\ 1&1&1\\ 1&1&1\\ \end{smallmatrix}\right)$
$\left(\begin{smallmatrix}1&1&1\\ 1&2&0\\ 1&0&2\\ \end{smallmatrix}\right)$
$\left(\begin{smallmatrix}1&2&0\\ 1&0&2\\ 0&1&2\\ \end{smallmatrix}\right)$


\medskip

\underline{4-regular graphs:}

$\left(\begin{smallmatrix}0&0&4\\ 0&0&4\\ 1&1&2\\ \end{smallmatrix}\right)$
$\left(\begin{smallmatrix}0&0&4\\ 0&0&4\\ 1&2&1\\ \end{smallmatrix}\right)$
$\left(\begin{smallmatrix}0&0&4\\ 0&0&4\\ 1&3&0\\ \end{smallmatrix}\right)$
$\left(\begin{smallmatrix}0&0&4\\ 0&0&4\\ 2&2&0\\ \end{smallmatrix}\right)$
$\left(\begin{smallmatrix}0&0&4\\ 0&1&3\\ 1&1&2\\ \end{smallmatrix}\right)$
$\left(\begin{smallmatrix}0&0&4\\ 0&1&3\\ 1&2&1\\ \end{smallmatrix}\right)$
$\left(\begin{smallmatrix}0&0&4\\ 0&1&3\\ 1&3&0\\ \end{smallmatrix}\right)$
$\left(\begin{smallmatrix}0&0&4\\ 0&1&3\\ 2&2&0\\ \end{smallmatrix}\right)$
$\left(\begin{smallmatrix}0&0&4\\ 0&2&2\\ 1&1&2\\ \end{smallmatrix}\right)$
$\left(\begin{smallmatrix}0&0&4\\ 0&2&2\\ 1&2&1\\ \end{smallmatrix}\right)$
$\left(\begin{smallmatrix}0&0&4\\ 0&2&2\\ 2&1&1\\ \end{smallmatrix}\right)$
$\left(\begin{smallmatrix}0&0&4\\ 0&2&2\\ 2&2&0\\ \end{smallmatrix}\right)$
$\left(\begin{smallmatrix}0&0&4\\ 0&3&1\\ 1&1&2\\ \end{smallmatrix}\right)$
$\left(\begin{smallmatrix}0&0&4\\ 0&3&1\\ 2&1&1\\ \end{smallmatrix}\right)$
$\left(\begin{smallmatrix}0&0&4\\ 0&3&1\\ 3&1&0\\ \end{smallmatrix}\right)$
$\left(\begin{smallmatrix}0&1&3\\ 1&0&3\\ 1&1&2\\ \end{smallmatrix}\right)$
$\left(\begin{smallmatrix}0&1&3\\ 1&0&3\\ 2&2&0\\ \end{smallmatrix}\right)$
$\left(\begin{smallmatrix}0&1&3\\ 1&2&1\\ 3&1&0\\ \end{smallmatrix}\right)$
$\left(\begin{smallmatrix}0&1&3\\ 1&3&0\\ 1&0&3\\ \end{smallmatrix}\right)$
$\left(\begin{smallmatrix}0&1&3\\ 1&3&0\\ 2&0&2\\ \end{smallmatrix}\right)$
$\left(\begin{smallmatrix}0&1&3\\ 1&3&0\\ 3&0&1\\ \end{smallmatrix}\right)$
$\left(\begin{smallmatrix}0&2&2\\ 1&0&3\\ 1&3&0\\ \end{smallmatrix}\right)$
$\left(\begin{smallmatrix}0&2&2\\ 1&1&2\\ 1&2&1\\ \end{smallmatrix}\right)$
$\left(\begin{smallmatrix}0&2&2\\ 1&2&1\\ 1&1&2\\ \end{smallmatrix}\right)$
$\left(\begin{smallmatrix}0&2&2\\ 1&3&0\\ 1&0&3\\ \end{smallmatrix}\right)$
$\left(\begin{smallmatrix}0&2&2\\ 2&0&2\\ 1&1&2\\ \end{smallmatrix}\right)$
$\left(\begin{smallmatrix}0&2&2\\ 2&0&2\\ 2&2&0\\ \end{smallmatrix}\right)$
$\left(\begin{smallmatrix}0&2&2\\ 2&1&1\\ 2&1&1\\ \end{smallmatrix}\right)$
$\left(\begin{smallmatrix}0&2&2\\ 2&2&0\\ 1&0&3\\ \end{smallmatrix}\right)$
$\left(\begin{smallmatrix}0&2&2\\ 2&2&0\\ 2&0&2\\ \end{smallmatrix}\right)$
$\left(\begin{smallmatrix}0&4&0\\ 1&0&3\\ 0&1&3\\ \end{smallmatrix}\right)$
$\left(\begin{smallmatrix}0&4&0\\ 1&0&3\\ 0&2&2\\ \end{smallmatrix}\right)$
$\left(\begin{smallmatrix}0&4&0\\ 1&1&2\\ 0&1&3\\ \end{smallmatrix}\right)$
$\left(\begin{smallmatrix}0&4&0\\ 2&0&2\\ 0&1&3\\ \end{smallmatrix}\right)$
$\left(\begin{smallmatrix}1&0&3\\ 0&0&4\\ 1&2&1\\ \end{smallmatrix}\right)$
$\left(\begin{smallmatrix}1&0&3\\ 0&0&4\\ 1&3&0\\ \end{smallmatrix}\right)$
$\left(\begin{smallmatrix}1&0&3\\ 0&1&3\\ 1&1&2\\ \end{smallmatrix}\right)$
$\left(\begin{smallmatrix}1&0&3\\ 0&1&3\\ 1&2&1\\ \end{smallmatrix}\right)$
$\left(\begin{smallmatrix}1&0&3\\ 0&1&3\\ 1&3&0\\ \end{smallmatrix}\right)$
$\left(\begin{smallmatrix}1&0&3\\ 0&1&3\\ 2&2&0\\ \end{smallmatrix}\right)$
$\left(\begin{smallmatrix}1&0&3\\ 0&2&2\\ 1&1&2\\ \end{smallmatrix}\right)$
$\left(\begin{smallmatrix}1&0&3\\ 0&2&2\\ 1&2&1\\ \end{smallmatrix}\right)$
$\left(\begin{smallmatrix}1&0&3\\ 0&2&2\\ 2&2&0\\ \end{smallmatrix}\right)$
$\left(\begin{smallmatrix}1&0&3\\ 0&3&1\\ 1&1&2\\ \end{smallmatrix}\right)$
$\left(\begin{smallmatrix}1&0&3\\ 0&3&1\\ 2&1&1\\ \end{smallmatrix}\right)$
$\left(\begin{smallmatrix}1&1&2\\ 1&1&2\\ 1&1&2\\ \end{smallmatrix}\right)$
$\left(\begin{smallmatrix}1&1&2\\ 1&2&1\\ 2&1&1\\ \end{smallmatrix}\right)$
$\left(\begin{smallmatrix}1&1&2\\ 1&3&0\\ 1&0&3\\ \end{smallmatrix}\right)$
$\left(\begin{smallmatrix}1&1&2\\ 1&3&0\\ 2&0&2\\ \end{smallmatrix}\right)$
$\left(\begin{smallmatrix}1&3&0\\ 1&0&3\\ 0&1&3\\ \end{smallmatrix}\right)$
$\left(\begin{smallmatrix}1&3&0\\ 1&0&3\\ 0&2&2\\ \end{smallmatrix}\right)$
$\left(\begin{smallmatrix}1&3&0\\ 1&1&2\\ 0&1&3\\ \end{smallmatrix}\right)$
$\left(\begin{smallmatrix}1&3&0\\ 2&0&2\\ 0&1&3\\ \end{smallmatrix}\right)$
$\left(\begin{smallmatrix}2&0&2\\ 0&0&4\\ 1&3&0\\ \end{smallmatrix}\right)$
$\left(\begin{smallmatrix}2&0&2\\ 0&1&3\\ 1&2&1\\ \end{smallmatrix}\right)$
$\left(\begin{smallmatrix}2&0&2\\ 0&1&3\\ 1&3&0\\ \end{smallmatrix}\right)$
$\left(\begin{smallmatrix}2&0&2\\ 0&2&2\\ 1&1&2\\ \end{smallmatrix}\right)$
$\left(\begin{smallmatrix}2&0&2\\ 0&2&2\\ 1&2&1\\ \end{smallmatrix}\right)$
$\left(\begin{smallmatrix}2&0&2\\ 0&3&1\\ 1&1&2\\ \end{smallmatrix}\right)$
$\left(\begin{smallmatrix}2&1&1\\ 1&2&1\\ 1&1&2\\ \end{smallmatrix}\right)$
$\left(\begin{smallmatrix}2&1&1\\ 1&3&0\\ 1&0&3\\ \end{smallmatrix}\right)$
$\left(\begin{smallmatrix}2&2&0\\ 1&0&3\\ 0&1&3\\ \end{smallmatrix}\right)$
$\left(\begin{smallmatrix}2&2&0\\ 1&0&3\\ 0&2&2\\ \end{smallmatrix}\right)$
$\left(\begin{smallmatrix}2&2&0\\ 1&1&2\\ 0&1&3\\ \end{smallmatrix}\right)$

\medskip

\underline{5-regular graphs}

$\left(\begin{smallmatrix}0&0&5\\ 0&0&5\\ 1&1&3\\ \end{smallmatrix}\right)$
$\left(\begin{smallmatrix}0&0&5\\ 0&0&5\\ 1&2&2\\ \end{smallmatrix}\right)$
$\left(\begin{smallmatrix}0&0&5\\ 0&0&5\\ 1&3&1\\ \end{smallmatrix}\right)$
$\left(\begin{smallmatrix}0&0&5\\ 0&0&5\\ 1&4&0\\ \end{smallmatrix}\right)$
$\left(\begin{smallmatrix}0&0&5\\ 0&0&5\\ 2&2&1\\ \end{smallmatrix}\right)$
$\left(\begin{smallmatrix}0&0&5\\ 0&0&5\\ 2&3&0\\ \end{smallmatrix}\right)$
$\left(\begin{smallmatrix}0&0&5\\ 0&1&4\\ 1&1&3\\ \end{smallmatrix}\right)$
$\left(\begin{smallmatrix}0&0&5\\ 0&1&4\\ 1&2&2\\ \end{smallmatrix}\right)$
$\left(\begin{smallmatrix}0&0&5\\ 0&1&4\\ 1&3&1\\ \end{smallmatrix}\right)$
$\left(\begin{smallmatrix}0&0&5\\ 0&1&4\\ 1&4&0\\ \end{smallmatrix}\right)$
$\left(\begin{smallmatrix}0&0&5\\ 0&1&4\\ 2&2&1\\ \end{smallmatrix}\right)$
$\left(\begin{smallmatrix}0&0&5\\ 0&1&4\\ 2&3&0\\ \end{smallmatrix}\right)$
$\left(\begin{smallmatrix}0&0&5\\ 0&2&3\\ 1&1&3\\ \end{smallmatrix}\right)$
$\left(\begin{smallmatrix}0&0&5\\ 0&2&3\\ 1&2&2\\ \end{smallmatrix}\right)$
$\left(\begin{smallmatrix}0&0&5\\ 0&2&3\\ 1&3&1\\ \end{smallmatrix}\right)$
$\left(\begin{smallmatrix}0&0&5\\ 0&2&3\\ 2&2&1\\ \end{smallmatrix}\right)$
$\left(\begin{smallmatrix}0&0&5\\ 0&2&3\\ 2&3&0\\ \end{smallmatrix}\right)$
$\left(\begin{smallmatrix}0&0&5\\ 0&2&3\\ 3&2&0\\ \end{smallmatrix}\right)$
$\left(\begin{smallmatrix}0&0&5\\ 0&3&2\\ 1&1&3\\ \end{smallmatrix}\right)$
$\left(\begin{smallmatrix}0&0&5\\ 0&3&2\\ 1&2&2\\ \end{smallmatrix}\right)$
$\left(\begin{smallmatrix}0&0&5\\ 0&3&2\\ 2&1&2\\ \end{smallmatrix}\right)$
$\left(\begin{smallmatrix}0&0&5\\ 0&3&2\\ 2&2&1\\ \end{smallmatrix}\right)$
$\left(\begin{smallmatrix}0&0&5\\ 0&3&2\\ 3&2&0\\ \end{smallmatrix}\right)$
$\left(\begin{smallmatrix}0&0&5\\ 0&4&1\\ 1&1&3\\ \end{smallmatrix}\right)$
$\left(\begin{smallmatrix}0&0&5\\ 0&4&1\\ 2&1&2\\ \end{smallmatrix}\right)$
$\left(\begin{smallmatrix}0&0&5\\ 0&4&1\\ 3&1&1\\ \end{smallmatrix}\right)$
$\left(\begin{smallmatrix}0&0&5\\ 0&4&1\\ 4&1&0\\ \end{smallmatrix}\right)$
$\left(\begin{smallmatrix}0&1&4\\ 1&0&4\\ 1&1&3\\ \end{smallmatrix}\right)$
$\left(\begin{smallmatrix}0&1&4\\ 1&0&4\\ 2&2&1\\ \end{smallmatrix}\right)$
$\left(\begin{smallmatrix}0&1&4\\ 1&2&2\\ 2&1&2\\ \end{smallmatrix}\right)$
$\left(\begin{smallmatrix}0&1&4\\ 1&3&1\\ 4&1&0\\ \end{smallmatrix}\right)$
$\left(\begin{smallmatrix}0&1&4\\ 1&4&0\\ 1&0&4\\ \end{smallmatrix}\right)$
$\left(\begin{smallmatrix}0&1&4\\ 1&4&0\\ 2&0&3\\ \end{smallmatrix}\right)$
$\left(\begin{smallmatrix}0&1&4\\ 1&4&0\\ 3&0&2\\ \end{smallmatrix}\right)$
$\left(\begin{smallmatrix}0&1&4\\ 1&4&0\\ 4&0&1\\ \end{smallmatrix}\right)$
$\left(\begin{smallmatrix}0&2&3\\ 1&1&3\\ 1&2&2\\ \end{smallmatrix}\right)$
$\left(\begin{smallmatrix}0&2&3\\ 1&4&0\\ 1&0&4\\ \end{smallmatrix}\right)$
$\left(\begin{smallmatrix}0&2&3\\ 2&0&3\\ 1&1&3\\ \end{smallmatrix}\right)$
$\left(\begin{smallmatrix}0&2&3\\ 2&0&3\\ 2&2&1\\ \end{smallmatrix}\right)$
$\left(\begin{smallmatrix}0&2&3\\ 2&1&2\\ 3&2&0\\ \end{smallmatrix}\right)$
$\left(\begin{smallmatrix}0&2&3\\ 2&2&1\\ 3&1&1\\ \end{smallmatrix}\right)$
$\left(\begin{smallmatrix}0&2&3\\ 2&3&0\\ 1&0&4\\ \end{smallmatrix}\right)$
$\left(\begin{smallmatrix}0&2&3\\ 2&3&0\\ 2&0&3\\ \end{smallmatrix}\right)$
$\left(\begin{smallmatrix}0&2&3\\ 2&3&0\\ 3&0&2\\ \end{smallmatrix}\right)$
$\left(\begin{smallmatrix}0&3&2\\ 2&3&0\\ 1&0&4\\ \end{smallmatrix}\right)$
$\left(\begin{smallmatrix}0&3&2\\ 3&0&2\\ 1&1&3\\ \end{smallmatrix}\right)$
$\left(\begin{smallmatrix}0&3&2\\ 3&2&0\\ 1&0&4\\ \end{smallmatrix}\right)$
$\left(\begin{smallmatrix}0&5&0\\ 1&0&4\\ 0&1&4\\ \end{smallmatrix}\right)$
$\left(\begin{smallmatrix}0&5&0\\ 1&0&4\\ 0&2&3\\ \end{smallmatrix}\right)$
$\left(\begin{smallmatrix}0&5&0\\ 1&0&4\\ 0&3&2\\ \end{smallmatrix}\right)$
$\left(\begin{smallmatrix}0&5&0\\ 1&1&3\\ 0&1&4\\ \end{smallmatrix}\right)$
$\left(\begin{smallmatrix}0&5&0\\ 1&1&3\\ 0&2&3\\ \end{smallmatrix}\right)$
$\left(\begin{smallmatrix}0&5&0\\ 1&2&2\\ 0&1&4\\ \end{smallmatrix}\right)$
$\left(\begin{smallmatrix}0&5&0\\ 2&0&3\\ 0&1&4\\ \end{smallmatrix}\right)$
$\left(\begin{smallmatrix}0&5&0\\ 2&0&3\\ 0&2&3\\ \end{smallmatrix}\right)$
$\left(\begin{smallmatrix}0&5&0\\ 2&1&2\\ 0&1&4\\ \end{smallmatrix}\right)$
$\left(\begin{smallmatrix}0&5&0\\ 3&0&2\\ 0&1&4\\ \end{smallmatrix}\right)$
$\left(\begin{smallmatrix}1&0&4\\ 0&0&5\\ 1&2&2\\ \end{smallmatrix}\right)$
$\left(\begin{smallmatrix}1&0&4\\ 0&0&5\\ 1&3&1\\ \end{smallmatrix}\right)$
$\left(\begin{smallmatrix}1&0&4\\ 0&0&5\\ 1&4&0\\ \end{smallmatrix}\right)$
$\left(\begin{smallmatrix}1&0&4\\ 0&0&5\\ 2&3&0\\ \end{smallmatrix}\right)$
$\left(\begin{smallmatrix}1&0&4\\ 0&1&4\\ 1&1&3\\ \end{smallmatrix}\right)$
$\left(\begin{smallmatrix}1&0&4\\ 0&1&4\\ 1&2&2\\ \end{smallmatrix}\right)$
$\left(\begin{smallmatrix}1&0&4\\ 0&1&4\\ 1&3&1\\ \end{smallmatrix}\right)$
$\left(\begin{smallmatrix}1&0&4\\ 0&1&4\\ 1&4&0\\ \end{smallmatrix}\right)$
$\left(\begin{smallmatrix}1&0&4\\ 0&1&4\\ 2&2&1\\ \end{smallmatrix}\right)$
$\left(\begin{smallmatrix}1&0&4\\ 0&1&4\\ 2&3&0\\ \end{smallmatrix}\right)$
$\left(\begin{smallmatrix}1&0&4\\ 0&2&3\\ 1&1&3\\ \end{smallmatrix}\right)$
$\left(\begin{smallmatrix}1&0&4\\ 0&2&3\\ 1&2&2\\ \end{smallmatrix}\right)$
$\left(\begin{smallmatrix}1&0&4\\ 0&2&3\\ 1&3&1\\ \end{smallmatrix}\right)$
$\left(\begin{smallmatrix}1&0&4\\ 0&2&3\\ 2&2&1\\ \end{smallmatrix}\right)$
$\left(\begin{smallmatrix}1&0&4\\ 0&2&3\\ 2&3&0\\ \end{smallmatrix}\right)$
$\left(\begin{smallmatrix}1&0&4\\ 0&3&2\\ 1&1&3\\ \end{smallmatrix}\right)$
$\left(\begin{smallmatrix}1&0&4\\ 0&3&2\\ 1&2&2\\ \end{smallmatrix}\right)$
$\left(\begin{smallmatrix}1&0&4\\ 0&3&2\\ 2&1&2\\ \end{smallmatrix}\right)$
$\left(\begin{smallmatrix}1&0&4\\ 0&3&2\\ 2&2&1\\ \end{smallmatrix}\right)$
$\left(\begin{smallmatrix}1&0&4\\ 0&3&2\\ 3&2&0\\ \end{smallmatrix}\right)$
$\left(\begin{smallmatrix}1&0&4\\ 0&4&1\\ 1&1&3\\ \end{smallmatrix}\right)$
$\left(\begin{smallmatrix}1&0&4\\ 0&4&1\\ 2&1&2\\ \end{smallmatrix}\right)$
$\left(\begin{smallmatrix}1&0&4\\ 0&4&1\\ 3&1&1\\ \end{smallmatrix}\right)$
$\left(\begin{smallmatrix}1&1&3\\ 1&1&3\\ 1&1&3\\ \end{smallmatrix}\right)$
$\left(\begin{smallmatrix}1&1&3\\ 1&1&3\\ 2&2&1\\ \end{smallmatrix}\right)$
$\left(\begin{smallmatrix}1&1&3\\ 1&3&1\\ 3&1&1\\ \end{smallmatrix}\right)$
$\left(\begin{smallmatrix}1&1&3\\ 1&4&0\\ 1&0&4\\ \end{smallmatrix}\right)$
$\left(\begin{smallmatrix}1&1&3\\ 1&4&0\\ 2&0&3\\ \end{smallmatrix}\right)$
$\left(\begin{smallmatrix}1&1&3\\ 1&4&0\\ 3&0&2\\ \end{smallmatrix}\right)$
$\left(\begin{smallmatrix}1&2&2\\ 1&0&4\\ 1&4&0\\ \end{smallmatrix}\right)$
$\left(\begin{smallmatrix}1&2&2\\ 1&1&3\\ 1&3&1\\ \end{smallmatrix}\right)$
$\left(\begin{smallmatrix}1&2&2\\ 1&2&2\\ 1&2&2\\ \end{smallmatrix}\right)$
$\left(\begin{smallmatrix}1&2&2\\ 1&3&1\\ 1&1&3\\ \end{smallmatrix}\right)$
$\left(\begin{smallmatrix}1&2&2\\ 1&4&0\\ 1&0&4\\ \end{smallmatrix}\right)$
$\left(\begin{smallmatrix}1&2&2\\ 2&1&2\\ 1&1&3\\ \end{smallmatrix}\right)$
$\left(\begin{smallmatrix}1&2&2\\ 2&1&2\\ 2&2&1\\ \end{smallmatrix}\right)$
$\left(\begin{smallmatrix}1&2&2\\ 2&2&1\\ 2&1&2\\ \end{smallmatrix}\right)$
$\left(\begin{smallmatrix}1&2&2\\ 2&3&0\\ 1&0&4\\ \end{smallmatrix}\right)$
$\left(\begin{smallmatrix}1&2&2\\ 2&3&0\\ 2&0&3\\ \end{smallmatrix}\right)$
$\left(\begin{smallmatrix}1&4&0\\ 1&0&4\\ 0&1&4\\ \end{smallmatrix}\right)$
$\left(\begin{smallmatrix}1&4&0\\ 1&0&4\\ 0&2&3\\ \end{smallmatrix}\right)$
$\left(\begin{smallmatrix}1&4&0\\ 1&0&4\\ 0&3&2\\ \end{smallmatrix}\right)$
$\left(\begin{smallmatrix}1&4&0\\ 1&1&3\\ 0&1&4\\ \end{smallmatrix}\right)$
$\left(\begin{smallmatrix}1&4&0\\ 1&1&3\\ 0&2&3\\ \end{smallmatrix}\right)$
$\left(\begin{smallmatrix}1&4&0\\ 1&2&2\\ 0&1&4\\ \end{smallmatrix}\right)$
$\left(\begin{smallmatrix}1&4&0\\ 2&0&3\\ 0&1&4\\ \end{smallmatrix}\right)$
$\left(\begin{smallmatrix}1&4&0\\ 2&0&3\\ 0&2&3\\ \end{smallmatrix}\right)$
$\left(\begin{smallmatrix}1&4&0\\ 2&1&2\\ 0&1&4\\ \end{smallmatrix}\right)$
$\left(\begin{smallmatrix}1&4&0\\ 3&0&2\\ 0&1&4\\ \end{smallmatrix}\right)$
$\left(\begin{smallmatrix}2&0&3\\ 0&0&5\\ 1&2&2\\ \end{smallmatrix}\right)$
$\left(\begin{smallmatrix}2&0&3\\ 0&0&5\\ 1&3&1\\ \end{smallmatrix}\right)$
$\left(\begin{smallmatrix}2&0&3\\ 0&0&5\\ 1&4&0\\ \end{smallmatrix}\right)$
$\left(\begin{smallmatrix}2&0&3\\ 0&1&4\\ 1&2&2\\ \end{smallmatrix}\right)$
$\left(\begin{smallmatrix}2&0&3\\ 0&1&4\\ 1&3&1\\ \end{smallmatrix}\right)$
$\left(\begin{smallmatrix}2&0&3\\ 0&1&4\\ 1&4&0\\ \end{smallmatrix}\right)$
$\left(\begin{smallmatrix}2&0&3\\ 0&1&4\\ 2&3&0\\ \end{smallmatrix}\right)$
$\left(\begin{smallmatrix}2&0&3\\ 0&2&3\\ 1&1&3\\ \end{smallmatrix}\right)$
$\left(\begin{smallmatrix}2&0&3\\ 0&2&3\\ 1&2&2\\ \end{smallmatrix}\right)$
$\left(\begin{smallmatrix}2&0&3\\ 0&2&3\\ 1&3&1\\ \end{smallmatrix}\right)$
$\left(\begin{smallmatrix}2&0&3\\ 0&2&3\\ 2&2&1\\ \end{smallmatrix}\right)$
$\left(\begin{smallmatrix}2&0&3\\ 0&2&3\\ 2&3&0\\ \end{smallmatrix}\right)$
$\left(\begin{smallmatrix}2&0&3\\ 0&3&2\\ 1&1&3\\ \end{smallmatrix}\right)$
$\left(\begin{smallmatrix}2&0&3\\ 0&3&2\\ 1&2&2\\ \end{smallmatrix}\right)$
$\left(\begin{smallmatrix}2&0&3\\ 0&3&2\\ 2&2&1\\ \end{smallmatrix}\right)$
$\left(\begin{smallmatrix}2&0&3\\ 0&4&1\\ 1&1&3\\ \end{smallmatrix}\right)$
$\left(\begin{smallmatrix}2&0&3\\ 0&4&1\\ 2&1&2\\ \end{smallmatrix}\right)$
$\left(\begin{smallmatrix}2&1&2\\ 1&2&2\\ 1&1&3\\ \end{smallmatrix}\right)$
$\left(\begin{smallmatrix}2&1&2\\ 1&3&1\\ 2&1&2\\ \end{smallmatrix}\right)$
$\left(\begin{smallmatrix}2&1&2\\ 1&4&0\\ 1&0&4\\ \end{smallmatrix}\right)$
$\left(\begin{smallmatrix}2&1&2\\ 1&4&0\\ 2&0&3\\ \end{smallmatrix}\right)$
$\left(\begin{smallmatrix}2&3&0\\ 1&0&4\\ 0&1&4\\ \end{smallmatrix}\right)$
$\left(\begin{smallmatrix}2&3&0\\ 1&0&4\\ 0&2&3\\ \end{smallmatrix}\right)$
$\left(\begin{smallmatrix}2&3&0\\ 1&0&4\\ 0&3&2\\ \end{smallmatrix}\right)$
$\left(\begin{smallmatrix}2&3&0\\ 1&1&3\\ 0&1&4\\ \end{smallmatrix}\right)$
$\left(\begin{smallmatrix}2&3&0\\ 1&1&3\\ 0&2&3\\ \end{smallmatrix}\right)$
$\left(\begin{smallmatrix}2&3&0\\ 1&2&2\\ 0&1&4\\ \end{smallmatrix}\right)$
$\left(\begin{smallmatrix}2&3&0\\ 2&0&3\\ 0&1&4\\ \end{smallmatrix}\right)$
$\left(\begin{smallmatrix}2&3&0\\ 2&0&3\\ 0&2&3\\ \end{smallmatrix}\right)$
$\left(\begin{smallmatrix}2&3&0\\ 2&1&2\\ 0&1&4\\ \end{smallmatrix}\right)$
$\left(\begin{smallmatrix}3&0&2\\ 0&0&5\\ 1&3&1\\ \end{smallmatrix}\right)$
$\left(\begin{smallmatrix}3&0&2\\ 0&0&5\\ 1&4&0\\ \end{smallmatrix}\right)$
$\left(\begin{smallmatrix}3&0&2\\ 0&1&4\\ 1&3&1\\ \end{smallmatrix}\right)$
$\left(\begin{smallmatrix}3&0&2\\ 0&1&4\\ 1&4&0\\ \end{smallmatrix}\right)$
$\left(\begin{smallmatrix}3&0&2\\ 0&2&3\\ 1&2&2\\ \end{smallmatrix}\right)$
$\left(\begin{smallmatrix}3&0&2\\ 0&2&3\\ 1&3&1\\ \end{smallmatrix}\right)$
$\left(\begin{smallmatrix}3&0&2\\ 0&3&2\\ 1&1&3\\ \end{smallmatrix}\right)$
$\left(\begin{smallmatrix}3&0&2\\ 0&3&2\\ 1&2&2\\ \end{smallmatrix}\right)$
$\left(\begin{smallmatrix}3&0&2\\ 0&4&1\\ 1&1&3\\ \end{smallmatrix}\right)$
$\left(\begin{smallmatrix}3&1&1\\ 1&3&1\\ 1&1&3\\ \end{smallmatrix}\right)$
$\left(\begin{smallmatrix}3&1&1\\ 1&4&0\\ 1&0&4\\ \end{smallmatrix}\right)$
$\left(\begin{smallmatrix}3&2&0\\ 1&0&4\\ 0&1&4\\ \end{smallmatrix}\right)$
$\left(\begin{smallmatrix}3&2&0\\ 1&0&4\\ 0&2&3\\ \end{smallmatrix}\right)$
$\left(\begin{smallmatrix}3&2&0\\ 1&0&4\\ 0&3&2\\ \end{smallmatrix}\right)$
$\left(\begin{smallmatrix}3&2&0\\ 1&1&3\\ 0&1&4\\ \end{smallmatrix}\right)$
$\left(\begin{smallmatrix}3&2&0\\ 1&1&3\\ 0&2&3\\ \end{smallmatrix}\right)$
$\left(\begin{smallmatrix}3&2&0\\ 1&2&2\\ 0&1&4\\ \end{smallmatrix}\right)$

\section*{Appendix B: Perfect colorings of the Platonic graphs}

\begin{figure}[h]
\[ \includegraphics[width=.3\textwidth]{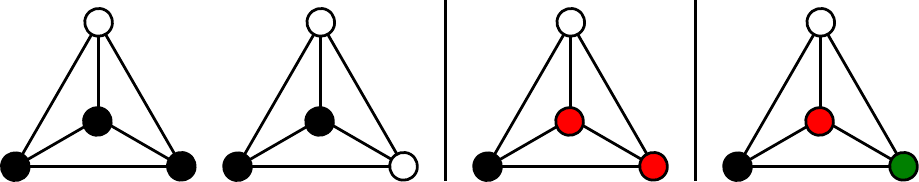} \]
\caption{The perfect 2-, 3- and 4-colorings of the tetrahedral graph.
  Here and in the following figures white is color 1, black is color 2,
  red is color 3 and green is color 4. \label{fig:tetrahedron}}
\end{figure}

\begin{figure}[h]
\[ \includegraphics[width=.5\textwidth]{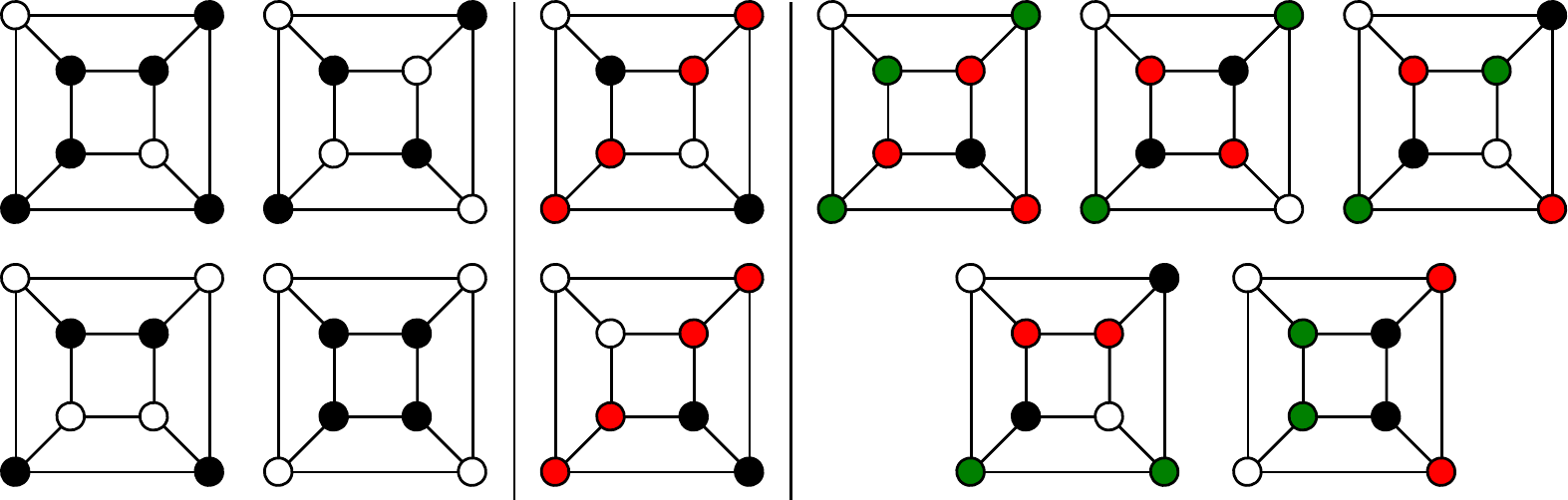} \]
\caption{The perfect 2-, 3- and 4-colorings of the
cube graph. \label{fig:cube}}
\end{figure}

\begin{figure}[h]
\[ \includegraphics[width=.6\textwidth]{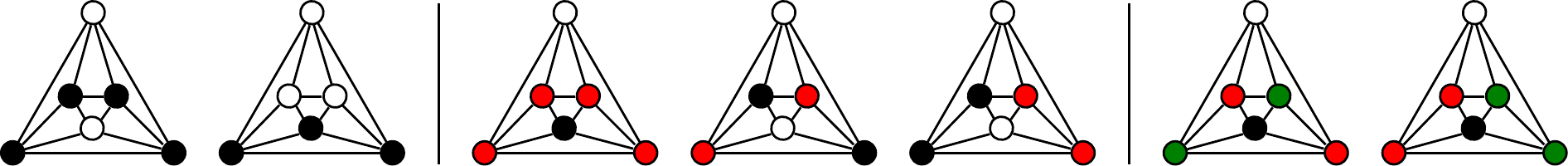} \]
\caption{The perfect 2-, 3- and 4-colorings of the
octahedral graph. \label{fig:octahedron}}
\end{figure}

\begin{figure}[h]
\[ \includegraphics[width=0.6\textwidth]{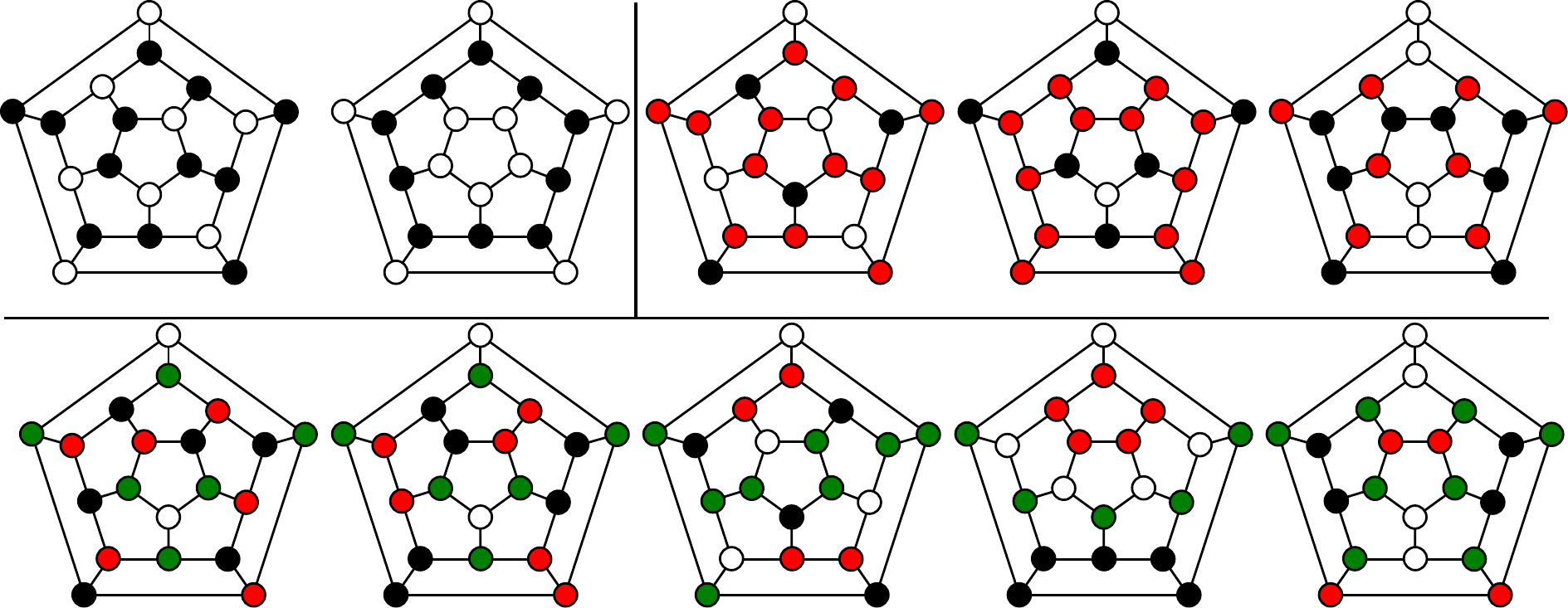} \]
\caption{The perfect 2-, 3- and 4-colorings of the
dodecahedral graph. \label{fig:dodecahedron}}
\end{figure}

\begin{figure}[h]
\[ \includegraphics[width=0.7\textwidth]{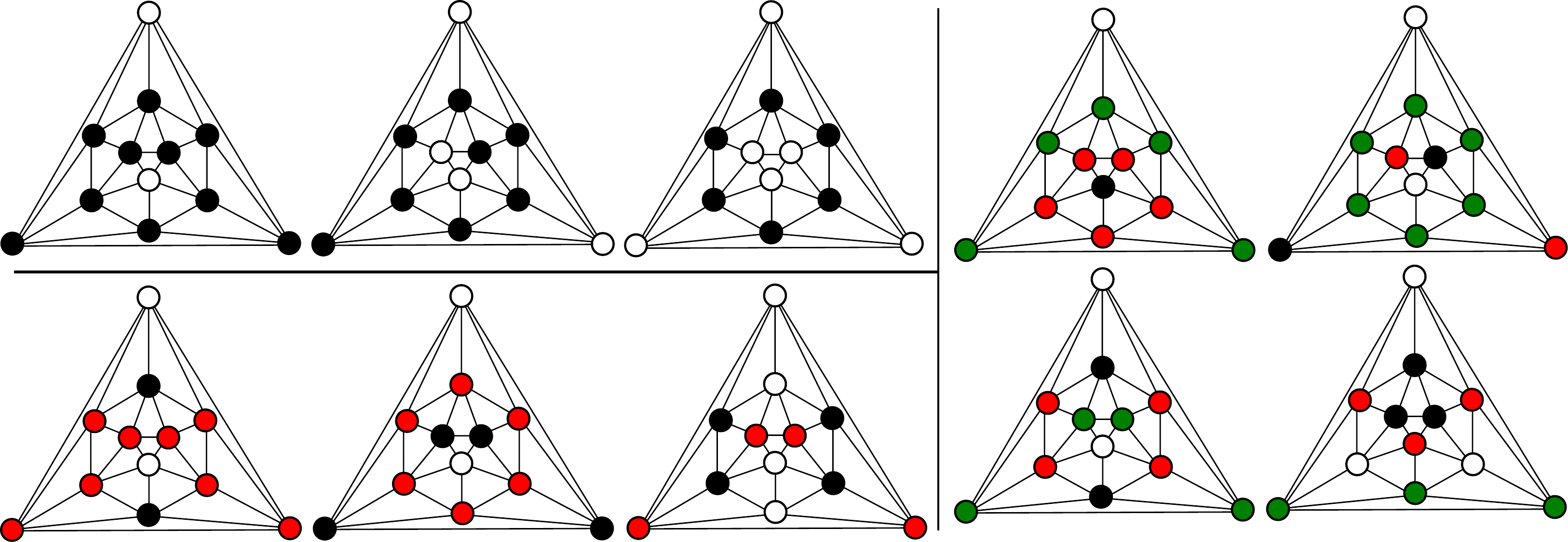} \]
\caption{The perfect 2-, 3- and 4-colorings of the
icosahedral graph. \label{fig:icosahedron}}
\end{figure}

\end{document}